\definecolor{mycyan}{cmyk}{1,.2,0,0} 
\definecolor{mymagenta}{cmyk}{0,1,0,0}
\definecolor{DarkOrchid}{RGB}{153,50,204}
\tikzstyle{vertex}=[circle, draw, inner sep=0pt, minimum size=6pt]
\newtheorem{theorem}{Theorem}[section]
\newtheorem{corollary}[theorem]{Corollary}
\newtheorem{proposition}[theorem]{Proposition}
\newtheorem{lemma}[theorem]{Lemma}
\newtheorem{definition}[theorem]{Definition}
\newtheorem{conjecture}[theorem]{Conjecture}
\newcommand\hobox{
\unitlength .23 mm % controls size
\begin{picture}(10,10)(0,0)
\linethickness{0.2mm}
\put(0,-0.4){\line(0,1){10.83}}
\linethickness{0.2mm}
\put(-0.4,0){\line(1,0){10.83}}
\linethickness{0.2mm}
\multiput(10,0.11)(0,1.885){6}{\line(0,1){0.9}}
\linethickness{0.2mm}
\multiput(0.11,10)(1.885,0){6}{\line(1,0){0.9}}
\end{picture}
\,
}
\newtheorem*{rep@theorem}{\rep@title} \newcommand{\newreptheorem}[2]{%
\newenvironment{rep#1}[1]{%
\def\rep@title{\bf #2 \ref{##1}}%
\begin{rep@theorem} }%
{\end{rep@theorem} } }
\theoremstyle{definition}
\newtheorem{example}[theorem]{Example}
\newcommand{\Span}{\operatorname{span}}
\newcommand{\aff}{\operatorname{aff}}
\newcommand{\Ehr}{\operatorname{Ehr}}
\newcommand{\vol}{\operatorname{vol}}
\newcommand{\val}{\operatorname{val}_2}
\newcommand{\bv}{\mathbf{v}}
\newcommand{\bu}{\mathbf{u}}
\newcommand{\bo}{\mathbf{0}}
\newcommand{\bx}{\mathbf{x}}
\newcommand{\by}{\mathbf{y}}
\newcommand{\be}{\mathbf{e}}
\newcommand{\ysub}[1]{\resizebox{1.5ex}{!}{\ydiagram{#1}}}
\renewcommand{\phi}{H^*}
\newcommand{\Z}{\mathbb{Z}}
\newcommand{\Q}{\mathbb{Q}}
\newcommand{\R}{\mathbb{R}}
\newcommand{\C}{\mathbb{C}}
\begin{document}

\title{\textsf{The equivariant Ehrhart theory of the permutahedron}}

\author{
    \textsf{Federico Ardila\footnote{\noindent \textsf{San Francisco State University; Universidad de Los Andes; federico@sfsu.edu.}}}
\and
    \textsf{Mariel Supina\footnote{\noindent \textsf{University of California, Berkeley; mariel\_supina@berkeley.edu.}}}
\and
    \textsf{Andr\'es R. Vindas-Mel\'endez\footnote{\noindent \textsf{San Francisco State University; University of Kentucky; andres.vindas@uky.edu.
\newline 
    The authors were supported by NSF Award DMS-1600609 and DMS-1855610 and Simons Fellowship 613384 (FA), the Graduate Fellowships for STEM Diversity (MS), and NSF Graduate Research Fellowship DGE-1247392 (ARVM).}}}}

\date{}
\maketitle

\begin{abstract} 
Equivariant Ehrhart theory enumerates the lattice points in a polytope with respect to a group action. Answering a question of Stapledon, we describe the equivariant Ehrhart theory of the permutahedron, and we prove his Effectiveness Conjecture in this special case.
\end{abstract}

\section{Introduction}

Ehrhart theory measures a polytope $P$ by counting the number $L_P(t)$ of lattice points in its dilations $tP$ for $t \in \Z_{\ge 0}$. The enumeration of lattice points in  polytopes plays an important role in numerous areas of mathematics.
For example, in algebraic geometry, a lattice polytope $P$ corresponds to a projective toric variety $X_P$ and an ample line bundle $L$, whose Hilbert polynomial is precisely the Ehrhart polynomial $L_P(t)$. The Ehrhart polynomial can be expressed in terms of the Todd class of the toric variety $X_P$ \cite{Morelli, Pommersheim}.
In the representation theory of semisimple Lie algebras, the Kostant partition function enumerates the lattice points in a natural family of polytopes. \cite{BaldoniVergne, MeszarosMorales}
These and many other connections have allowed for the combinatorial computation of many quantities of geometric and algebraic interest. They have also inspired numerous questions and provided interesting answers in Ehrhart theory itself. \cite{CoxLittleSchenck, Fulton}

Motivated by mirror symmetry, Stapledon \cite{Stapledon, Stapledonmirror} introduced \emph{equivariant Ehrhart theory} as a refinement of Ehrhart theory that takes into account the symmetries of the polytope $P$.
Let $G$ be a finite group acting linearly on a lattice polytope $P$.
Combinatorially, the goal of equivariant Ehrhart theory is to understand, for each $g \in G$, the lattice point enumerator $L_{P^g}(t)$ of the polytope $P^g \subseteq P$ fixed by $g$.
These quantities can be assembled into a sequence of virtual characters $H^*_0, H^*_1, H^*_2, \ldots$ of $G$, which one wishes to understand representation-theoretically.
Geometrically, these virtual characters arise naturally when one studies the action of $G$ on the cohomology of a $G$-invariant hypersurface in the toric variety associated to $P$. Stapledon showed that if $(X_P,L)$ admits a non-degenerate $G$-invariant hypersurface, then these virtual characters are effective and polynomial; in particular, they correspond to an actual representation of $G$. His Effectiveness Conjecture \cite[Conjecture 12.1]{Stapledon} states that the converse statement also holds.

To date, few examples of equivariant Ehrhart theory are understood. 
Stapledon computed it for regular simplices, hypercubes, and centrally symmetric polytopes.
If $\Delta$ is the Coxeter fan associated to a root system and $P$ is the convex hull of the primitive integer vectors of the rays of $\Delta$, he used the equivariant Ehrhart theory of $P$ to recover Procesi, Dolgachev--Lunts, and Stembridge's formula \cite{Procesi, DolgachevLunts, Stembridge} for the character of the action of the Weyl group on the cohomology of the toric variety $X_\Delta$.
In \cite{Stapledon},  Stapledon asked for the computation of the next natural example: the permutahedron under the action of the symmetric group. 
The corresponding toric variety is the permutahedral variety, which is the subject of great interest. For example, Huh used it in his Ph.D. thesis \cite{Huhthesis} to prove Rota's conjecture on the log-concavity of the coefficients of chromatic polynomials. In algebraic geometry, it arises as the Losev-Manin moduli space of curves \cite{LosevManin}.

The goal of this paper is to answer Stapledon's question: we compute the equivariant Ehrhart theory of the permutahedron and verify his Effectiveness Conjecture in this special case. Our proofs combine tools from discrete geometry, combinatorics, number theory, algebraic geometry, and representation theory. 

A significant new challenge that arises is that the fixed polytopes of the permutahedron are not integral. Thus the equivariant Ehrhart theory of the permutahedron requires surprisingly subtle arithmetic considerations -- which are absent from the ordinary Ehrhart theory of lattice polytopes -- as the following theorem illustrates.

\subsection{The Ehrhart quasipolynomials of the fixed polytopes of the permutahedron}\label{sec:quasi}

We consider the action of the symmetric group $S_n$ on the $(n-1)$-dimensional permutahedron $\Pi_n$.
For each permutation $\sigma\in S_n$, we define the \emph{fixed polytope} $\Pi_n^\sigma \subseteq \Pi_n$ to be the subset of the permutahedron $\Pi_n$ fixed by $\sigma$. 
Our first main result is a combinatorial formula for its lattice point enumerator $L_{\Pi_n^\sigma}(t) := |t\Pi_n^\sigma\cap \Z^n|$:

\begin{theorem}\label{thm:main theorem}
Let $\sigma$ be a permutation of $[n]:=\{1,2,\ldots,n\}$ and let $\lambda=(\ell_1, \ldots, \ell_m)$ be the partition of $[n]$ given by the lengths of the cycles of $\sigma$. Say a set partition $\pi = \{B_1,\dots,B_k\}$ of $[m]$ is \emph{$\lambda$-compatible} if for each block $B_i$, either $\ell_j$ is odd for some $j \in B_i$, or the minimum $2$-valuation among $\{\ell_j : j \in B_i\}$ is attained an even number of times.
Also write
    \begin{equation}\label{eq:v_pi}
        v_\pi = \prod_{i=1}^k\left(
        \gcd(\ell_j:j\in B_i)\cdot
        \Big(\sum_{j\in B_i}\ell_j\Big)^{|B_i|-2}\right).
    \end{equation}
Then the Ehrhart quasipolynomial of the fixed polytope $\Pi_n^\sigma$ is
    \begin{equation*}
        L_{\Pi_n^\sigma}(t) = 
            \begin{cases}
                \displaystyle \quad \,\, \sum_{\pi \vDash[m]}v_\pi \cdot t^{m-|\pi|} &\text{if $t$ is even}
                \\
                \displaystyle\sum_{\substack{\pi \vDash[m]\\\lambda-\text{compatible}}}  \hspace{-.5cm} v_\pi \cdot t^{m-|\pi|} &\text{if $t$ is odd}
            \end{cases}.
    \end{equation*}
\end{theorem}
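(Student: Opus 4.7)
My plan is to realize $\Pi_n^\sigma$ as a rational translate of a \emph{lattice} zonotope in cycle coordinates, apply Stanley's Ehrhart formula, sum over spanning forests of $K_m$ via a weighted Cayley identity, and then analyze the $2$-adic obstruction that the translation introduces for odd $t$. I parameterize the $\sigma$-fixed subspace of $\R^n$ by $y_1,\ldots,y_m$, where $y_j$ is the common value of $x_i$ for $i$ in the $j$-th cycle $C_j$; the embedding $y\mapsto\sum_j y_j\mathbf{1}_{C_j}$ pulls $\Z^n$ back to $\Z^m$. Starting from the zonotope description $\Pi_n = (1,2,\ldots,n) + \sum_{p<q}[\bo,e_p-e_q]$ together with $\Pi_n^\sigma = \mathrm{avg}_{\langle\sigma\rangle}(\Pi_n)$, each pair $\{p,q\}$ with $p\in C_i$, $q\in C_j$, $i\neq j$, averages to the same cycle-coordinate vector $e_i/\ell_i - e_j/\ell_j$, while same-cycle pairs average to $\bo$. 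Collecting the $\ell_i\ell_j$ pairs across $C_i$ and $C_j$ yields
\[
\Pi_n^\sigma \;=\; \mathbf{c}_\lambda \;+\; Z_\lambda, \qquad\text{where}\qquad Z_\lambda \;:=\; \sum_{1\leq i<j\leq m}\bigl[\bo,\,\ell_j e_i - \ell_i e_j\bigr],
\]
with $\mathbf{c}_\lambda$ chosen as an explicit vertex of $\Pi_n^\sigma$ so that $Z_\lambda$ is a lattice zonotope in $\Z^m$.

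Next I apply Stanley's Ehrhart formula $L_{Z_\lambda}(t) = \sum_S h(S)\,t^{|S|}$, where $S$ ranges over linearly independent subsets of the generators and $h(S)$ is the $\gcd$ of maximal minors of their matrix. Since the generators are indexed by edges of $K_m$, such $S$ are exactly the spanning forests $F$ of $K_m$; grouping by the connected-components partition $\pi$ gives $L_{Z_\lambda}(t) = \sum_\pi t^{m-|\pi|}\sum_{F\to\pi}h(F)$. Because $h$ factors across components, on each block $B$ a direct block-upper-triangular minors computation yields $h(T) = \gcd(\ell_j:j\in B)\cdot\prod_{j\in B}\ell_j^{\deg_T(j)-1}$ for any tree $T$ on $B$, and the weighted Cayley identity $\sum_{T\text{ tree on }B}\prod_{j\in B}x_j^{\deg_T(j)-1}=(\sum_{j\in B}x_j)^{|B|-2}$ with $x_j=\ell_j$ sums these to $\gcd(\ell_j:j\in B)\cdot(\sum_{j\in B}\ell_j)^{|B|-2}$. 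Multiplying across blocks recovers $v_\pi$, giving the stated even-$t$ formula, since for even $t$ we have $t\mathbf{c}_\lambda\in\Z^m$ and the translation is absorbed.

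For odd $t$, $t\mathbf{c}_\lambda \notin \Z^m$ in general, so $L_{\Pi_n^\sigma}(t)$ counts $\Z^m$-points in the translated zonotope $t\mathbf{c}_\lambda + tZ_\lambda$. A $2$-adic analysis of $\mathbf{c}_\lambda$ modulo the sublattice generated by each block's edge vectors would show that a forest $F$ contributes its full term $h(F)\,t^{|F|}$ if and only if each component $B$ is $\lambda$-compatible: the fractional obstruction along $B$ cancels precisely when some $\ell_j$ with $j\in B$ is odd, or else when the minimum $2$-valuation in $\{\ell_j:j\in B\}$ is attained an even number of times. The \emph{main obstacle} is exactly this $2$-adic parity analysis: pinpointing the lattice congruence each translated parallelepiped must satisfy, and matching it block by block with the combinatorial $\lambda$-compatibility condition, is the subtle step, whereas the zonotope structure plus Stanley's formula plus weighted Cayley is a direct calculation once the structural setup is in place.
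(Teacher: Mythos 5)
Your proposal follows essentially the same route as the paper (zonotope description of $\Pi_n^\sigma$ in cycle coordinates, Stanley's formula, forests of $K_m$ grouped by their component partition, the weighted Cayley identity giving $v_\pi$), and the even-$t$ half is complete in outline. However, there is a genuine gap: the entire odd-$t$ case, which is the actual content of the theorem, is asserted rather than proved. You write that a ``$2$-adic analysis of $\mathbf{c}_\lambda$ \emph{would show}'' that a forest contributes its full term exactly when each of its components is $\lambda$-compatible, and you identify this as the main obstacle --- but you never carry it out, and you also silently assume two nontrivial facts it rests on. First, you assume an all-or-nothing dichotomy: that a rationally translated half-open lattice parallelotope $t\,\mathbf{c}_\lambda + t\hobox_F$ contains either its full complement $\vol(\hobox_F)\,t^{|F|}$ of lattice points or none at all. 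This requires a lemma (the paper's Lemma \ref{lem:num_lattice_points}): the count is $\vol(\hobox)$ per unit cell if the affine span of the translate meets $\Z^n$, and $0$ otherwise. Second, you assume the obstruction depends only on the partition $\pi$ induced by $F$ and factors block by block; this is exactly what must be proved, and it is not obvious from the generator matrix alone.

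The missing computation is the following (the paper's Lemma \ref{lem:affine_subspace}). The affine span of $\hobox_F$ is cut out, block by block, by the equations $\sum_{j\in B_i}\ell_j x_j=\sum_{j\in B_i}\ell_j(\ell_j+1)/2$, because the edge vectors $\ell_j e_i-\ell_i e_j$ within a block are exactly the lattice vectors annihilated by the functional $\sum_{j\in B_i}\ell_j x_j$ and the translation vector has $j$-th coordinate $(\ell_j+1)/2$; dimension counting shows these are all the relations. Hence, for odd $t$, the span of $t\hobox_F$ meets $\Z^m$ iff $\gcd(\ell_j:j\in B_i)$ divides $\sum_{j\in B_i}\ell_j(\ell_j+1)/2$ for every block. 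Since the gcd always divides $\sum_{j\in B_i}\ell_j(\ell_j+1)$, this is a strict inequality of $2$-valuations, and writing $\ell_j=2^{p_j}q_j$ with $q_j$ odd one checks it holds iff some $\ell_j$ ($j\in B_i$) is odd or the minimal $p_j$ is attained an even number of times --- i.e., iff $B_i$ is $\lambda$-compatible. Without this derivation (and the lattice-point dichotomy it feeds into), your argument establishes only the even-$t$ formula; the odd-$t$ case, where all the arithmetic subtlety of the theorem lives, remains unproved.
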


As the compatibility condition of Theorem \ref{thm:main theorem} and Example \ref{ex:line} illustrate, there is some delicate number theory at play, even for the line segments that arise in the equivariant Ehrhart theory of the permutahedron.

\subsection{Equivariant Ehrhart theory}\label{sec:equivEhr}

\Cref{thm:main theorem} fits into the framework of equivariant Ehrhart theory, as we now explain.

Let $G$ be a finite group acting on $\Z^n$ and $P\subseteq \R^n$ be a $d$-dimensional lattice polytope that is invariant under the action of $G$. Let $M$ be the sublattice of $\Z^n$ obtained by translating the affine span of $P$ to the origin, and consider the induced representation $\rho: G \rightarrow GL(M)$.
We then obtain a family of permutation representations by looking at how $\rho$ permutes the lattice points inside the dilations of $P$.
Let $\chi_{tP}: G \rightarrow \C$ denote the permutation character associated to the action of $G$ on the lattice points in the $t^{\text{th}}$ dilate of $P$. We have
\[
\chi_{tP}(g) =   L_{P^g}(t) 
\]
where $P^g$ is the polytope of points in $P$ fixed by $g$ and $L_{P^g}(t)$ is its lattice point enumerator.

The permutation characters $\chi_{tP}$ live in the ring $R(G)$ of virtual characters of $G$, which are the integer combinations of the irreducible characters of $G$. The positive integer combinations are called \emph{effective}; they are the characters of representations of $G$.

Stapledon encoded the characters $\chi_{tP}$ in a power series $\phi[z] \in R(G)[[z]]$ given by %the following equation:
\begin{equation}\label{eq:phi_equation}
    \sum_{t\geq 0}\chi_{tP}(g)z^t= \frac{\phi[z](g)}{(1-z)\det(I- g \cdot z)}.
\end{equation}
We say that $\phi[z] =: \sum_{i\ge0}\phi_iz^i $ is $\emph{effective}$ if each virtual character $\phi_i$ is a character.
Stapledon denoted this series $\varphi[t]$, but we denote it $\phi[z]$ and call it the \emph{equivariant $\phi$-series} because for the identity element, $\phi[z](e) = h^*[z]$ is the well-studied $h$*-polynomial of $P$.

The main open problem in equivariant Ehrhart theory is to characterize when $\phi[z]$ is effective, and Stapledon offered the following conjecture.

\begin{conjecture}[{\cite[Effectiveness Conjecture~12.1]{Stapledon}}]\label{conj:Stapledon}
Let $P$ be a lattice polytope invariant under the action of a group $G$.
The following conditions are equivalent.

\begin{enumerate}[(i)]
	\item The toric variety of $P$ admits a $G$-invariant non-degenerate hypersurface.
	\item The equivariant $\phi$-series of $P$ is effective.
	\item The equivariant $\phi$-series of $P$ is a polynomial.
	\end{enumerate}
\end{conjecture}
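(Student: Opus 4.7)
The strategy is to verify each of the three conditions (i), (ii), (iii) independently for $(P,G) = (\Pi_n, S_n)$, which yields their equivalence trivially. Conditions (ii) and (iii) will be extracted directly from the Ehrhart quasipolynomials provided by Theorem \ref{thm:main theorem}, while (i) will be established by an explicit construction in the permutahedral toric variety. Stapledon's general implications (i)$\Rightarrow$(ii),(iii) provide a cross-check but will not be logically needed.

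To prove (iii), we work with the defining identity
\[
\sum_{t\ge 0} L_{\Pi_n^\sigma}(t)\, z^t \,=\, \frac{\phi[z](\sigma)}{(1-z)\det(I-\sigma z)},
\]
and recall that $\det(I-\sigma z)=\prod_{i=1}^m (1-z^{\ell_i})$ when $\sigma$ has cycle lengths $\ell_1,\ldots,\ell_m$. Substituting the formula of Theorem \ref{thm:main theorem} and splitting the series into its even and odd parts to handle the parity dichotomy, the plan is to simplify the resulting rational function and show that all denominator factors cancel, leaving a polynomial of degree at most $n-1$. The cancellation is controlled by combinatorial identities among the numbers $v_\pi$; the $\lambda$-compatibility condition is crucial, since the set partitions it excludes are precisely the ones needed to cancel the spurious pole at $z=-1$ contributed by cycles of equal $2$-valuation.

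For (ii), once $\phi[z](\sigma)$ is a polynomial in $z$ with character-valued coefficients, each coefficient $\phi_i$ must be shown to be an effective character of $S_n$. We plan to decompose $\phi_i$ into irreducibles via the standard inner product over conjugacy classes, using the explicit values computed in step (iii), and verify nonnegativity of every multiplicity. A cleaner alternative, which we will pursue when possible, is to give a representation-theoretic model: identify $\phi_i$ with the character of a naturally defined $S_n$-permutation module built from combinatorial data (such as weighted families of set partitions whose block-statistics reproduce $v_\pi$), from which effectiveness is immediate.

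For (i), the permutahedral variety $X_{\Pi_n}$ is smooth and projective, and the ample line bundle $L$ associated to $\Pi_n$ carries a natural lift of the $S_n$-action. We will exhibit an $S_n$-invariant section of $L$ whose zero locus is non-degenerate in Stapledon's sense, i.e., its intersection with every torus orbit closure is smooth of the expected dimension. Taking a generic $S_n$-invariant combination of the monomials indexed by the lattice points of $\Pi_n$, grouped into $S_n$-orbits, non-degeneracy reduces on each face $F \subseteq \Pi_n$ to a transversality condition on the corresponding face-polynomial; since $S_n$ permutes the faces within each orbit, it suffices to check one representative per orbit, yielding finitely many open genericity conditions that can be satisfied inside the $S_n$-invariant parameter space. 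The hardest step will almost certainly be (ii): while (iii) is essentially generating-function bookkeeping and (i) follows from a Bertini-type argument, the parity dependence in Theorem \ref{thm:main theorem} shows that no single dilate $\chi_{tP}$ realizes a given $\phi_i$ directly, so producing --- or recognizing --- the correct $S_n$-module is where the main representation-theoretic work lies.
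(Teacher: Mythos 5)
Your overall strategy is not viable: you set out to \emph{verify} each of conditions (i), (ii), (iii) for $(\Pi_n,S_n)$ and deduce their equivalence because all three are true. In fact, for $n\ge 4$ all three conditions are \emph{false}, and the equivalence holds because they fail simultaneously. Concretely, for $\sigma=(12)\in S_4$ (cycle type $\lambda=(2,1,1)$) one computes from Theorem \ref{thm:main theorem} that
\[
\phi[z](\sigma)=1+4z+11z^2-2z^3+\frac{4z^4}{1+z},
\]
so $\phi[z]$ is not a polynomial, hence not effective. The cancellation at $z=-1$ you are counting on does not occur: the $\lambda$-\emph{incompatible} set partitions are the ones that \emph{contribute} a pole at $z=-1$, of order $m-d+1$ where $d$ is the minimal number of blocks of a $\lambda$-incompatible partition, while the factor $\prod_{i}(1-z^{\ell_i})$ supplies a zero at $z=-1$ only of order equal to the number of even parts of $\lambda$. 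These match exactly when the number of even parts is $0$, $m-1$, or $m$, and for every $n\ge4$ some cycle type violates this (take $(n-2,1,1)$ for $n$ even and $(n-3,1,1,1)$ for $n$ odd). Your Bertini-type argument for (i) fails for the same underlying reason that genericity is not available: $\Pi_n$ for $n\ge4$ has square faces whose face polynomial is forced by $S_n$-invariance to be singular --- e.g.\ the face of $\Pi_4$ with vertices $(0,1,2,3)$, $(0,1,3,2)$, $(1,0,3,2)$, $(1,0,2,3)$ yields $yz^2w^3+yz^3w^2+xz^3w^2+xz^2w^3$, whose gradient vanishes on the torus whenever $x=-y$ and $z=-w$, for \emph{every} invariant choice of coefficients. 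The relevant discriminant vanishes identically on the invariant parameter space, so no generic choice rescues you.

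The correct route is to prove that each of (i), (ii), (iii) holds if and only if $n\le 3$. For $n\le3$ every cycle type has $0$, $m-1$, or $m$ even parts, so $\phi[z]$ is a polynomial; effectiveness then reduces to a finite check against the character tables of $S_1$, $S_2$, $S_3$; and an invariant non-degenerate hypersurface is exhibited explicitly (for $\Pi_3$, by showing that the discriminant of $a\cdot xyz$ plus the sum of the vertex monomials is a nonzero polynomial in $a$). For $n\ge4$ all three conditions fail as above. Your generating-function setup for extracting $\phi[z](\sigma)$ from Theorem \ref{thm:main theorem} is sound and matches Proposition \ref{prop:Ehrhart_series}; the error lies only in your claim about what that computation yields.
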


Our second main result is the following.

\begin{theorem}\label{thm:phi polynomial}
Stapledon's Effectiveness Conjecture holds for the permutahedron under the action of the symmetric group.
\end{theorem}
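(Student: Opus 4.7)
The plan is to establish all three conditions of \Cref{conj:Stapledon} for the pair $(\Pi_n, S_n)$; since Stapledon has already proved (i)$\Rightarrow$(ii) and (i)$\Rightarrow$(iii), showing that all three conditions hold for this case will yield the full equivalence.

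The first step is to derive an explicit expression for the equivariant $\phi$-series from \Cref{thm:main theorem}. For $\sigma \in S_n$ with cycle type $\lambda = (\ell_1, \dots, \ell_m)$, the action on $M = \{x \in \Z^n : \sum_i x_i = 0\}$ has characteristic polynomial $\det(I - \sigma z) = \prod_{i=1}^m (1 - z^{\ell_i})/(1-z)$, so \Cref{eq:phi_equation} specializes to
\begin{equation*}
\phi[z](\sigma) = \prod_{i=1}^m (1 - z^{\ell_i}) \cdot \sum_{t \geq 0} L_{\Pi_n^\sigma}(t)\, z^t.
\end{equation*}
Splitting the sum by parity of $t$ and substituting the formula in \Cref{thm:main theorem}, I would verify that the resulting rational function is actually a polynomial. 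The pole at $z=1$ has order at most $m$ and is cancelled by the $m$ simple vanishings of $\prod_i(1-z^{\ell_i})$ there; the pole at $z=-1$, controlled by the discrepancy between the even-$t$ and odd-$t$ values of $L_{\Pi_n^\sigma}(t)$, has order bounded by the number of even $\ell_i$, and is cancelled by $(1-z^{\ell_i})$ precisely for those even parts. This establishes condition~(iii) and, importantly, reveals that the $\lambda$-compatibility condition of \Cref{thm:main theorem} is the exact combinatorial manifestation of the cancellation required for polynomiality.

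For condition~(ii) I would decompose each coefficient $\phi_i$ into irreducible characters $\chi^\mu$ using character orthogonality, reducing the problem to Schur-positivity of the symmetric function obtained as the Frobenius characteristic of $\sigma \mapsto \phi_i(\sigma)$. A natural strategy is to rewrite the character formula derived in the previous step as a nonnegative combination, indexed by set partitions of $[m]$, of induced representations from products of cyclic subgroups, so that Schur-positivity follows from positivity of induction. The hardest step is expected to be precisely this reconciliation, since the $2$-adic arithmetic embedded in the $\lambda$-compatibility condition must be matched with the positivity intrinsic to the representation theory of $S_n$; it may require a case analysis on the $2$-valuations of the cycle lengths or a carefully constructed Young-tableau model.

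Finally, for condition~(i), I would exhibit a non-degenerate $S_n$-invariant hypersurface in the permutahedral toric variety $X_{\Pi_n}$. A generic $S_n$-symmetric Laurent polynomial with Newton polytope $\Pi_n$ is the natural candidate; non-degeneracy on each torus-invariant stratum follows from a dimension count showing that the subspace of $S_n$-invariant sections of the ample line bundle corresponding to $\Pi_n$ is not contained in any component of the discriminant, using the fact that each face of $\Pi_n$ has enough lattice points preserved by its $S_n$-stabilizer. Combined with (ii) and (iii), this completes the verification of \Cref{conj:Stapledon} for $(\Pi_n, S_n)$.
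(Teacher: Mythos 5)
There is a fundamental gap: your plan is to \emph{establish} all three conditions of \Cref{conj:Stapledon} for $(\Pi_n,S_n)$, but for $n\ge 4$ all three conditions are in fact \emph{false}, so the proposed proof cannot succeed. The paper verifies the conjecture not by proving (i)--(iii), but by showing that each of the three conditions holds if and only if $n\le 3$ (Propositions \ref{prop:poly}, \ref{prop:eff}, and \ref{prop:smooth}); since the implications (i) $\Rightarrow$ (ii) $\Rightarrow$ (iii) are already due to Stapledon, the equivalence follows because the three conditions are simultaneously true for $n\le 3$ and simultaneously false for $n\ge 4$. Concretely, your key claim that the pole of $\Ehr_{\Pi_n^\sigma}(z)$ at $z=-1$ ``has order bounded by the number of even $\ell_i$, and is cancelled by $(1-z^{\ell_i})$ precisely for those even parts'' is incorrect. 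The order of that pole is $m-d+1$, where $d$ is the minimum number of blocks of a $\lambda$-incompatible partition of $[m]$, and this can strictly exceed the number of even parts: for $\sigma=(12)(3)(4)\in S_4$ with $\lambda=(2,1,1)$ there is exactly one even part, yet
\begin{equation*}
\phi[z]\big((12)\big) \;=\; 1+4z+11z^2-2z^3+\frac{4z^4}{1+z},
\end{equation*}
which is not a polynomial. Polynomiality of $\phi[z](\sigma)$ holds only when the number of even parts of $\lambda$ is $0$, $m-1$, or $m$, and for every $n\ge 4$ there is a cycle type violating this (e.g.\ $(n-2,1,1)$ or $(n-3,1,1,1)$).

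The same failure propagates to the other two conditions, so the later steps of your outline would also break down. The Schur-positivity you hope to prove in step (ii) is false for $n=4$: the coefficients $\phi_i$ of $\phi_{\Pi_4}[z]$ for $i\ge 4$ contain the virtual character $\chi_{triv}-\chi_{alt}+\chi_{std}-\chi_{\ysub{2,1,1}}$ with alternating signs, so $\phi[z]$ is not effective. And in step (i), a generic $S_n$-symmetric section is \emph{never} non-degenerate for $n\ge 4$: every $\Pi_n$ with $n \ge 4$ has a square face whose induced face-hypersurface (e.g.\ $yz^2w^3+yz^3w^2+xz^3w^2+xz^2w^3=0$ for the square face of $\Pi_4$ with vertices the permutations of $(0,1,2,3)$ fixing $\{x,y\}$ and $\{z,w\}$) is singular on the torus, for any choice of invariant coefficients. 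What remains true, and what the paper actually does, is the positive direction for $n\le 3$: there one checks effectiveness directly against the character tables of $S_1,S_2,S_3$, and exhibits an explicit invariant non-degenerate hypersurface for $\Pi_3$ by computing that the discriminant of the one-parameter family of invariant sections is not identically zero. Your derivation of $\phi[z](\sigma)$ from \Cref{thm:main theorem} and \eqref{eq:phi_equation}, and the observation that the pole at $z=1$ always cancels, do agree with \Cref{prop:Ehrhart_series} and the first part of the paper's analysis; the error is in the $z=-1$ analysis and in the overall logical structure of the verification.
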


Finally, in \Cref{prop:conjectures hold} we verify the three remaining conjectures of Stapledon in this case.

\subsection{Examples}\label{sec:examples}

\begin{example}
Let us illustrate these results for the permutahedron $\Pi_4$ and the permutation $\sigma = (12)(3)(4)$ which has cycle type $\lambda = (2,1,1)$, illustrated in \Cref{fig:example}. The fixed polytope $\Pi_4^{(12)}$ is a half-integral hexagon, and one may verify manually that 
\[
L_{\Pi_4^{(1 2)}}(t) = 
\begin{cases}
    4t^2+3t+1 &\text{if } t \text{ is even}\\
    4t^2+2t &\text{if } t \text{ is odd},
\end{cases}
\qquad \qquad
\phi[z](12)  = 1+4z+11z^2-2z^3+\frac{4z^4}{1+z}.
\]
Since the $\phi$-series of $\Pi_4$ is not polynomial when evaluated at $(12)$, Stapledon's \Cref{conj:Stapledon} predicts that it is also not effective, and that the permutahedral variety $X_{\Pi_4}$ does not admit an $S_4$-invariant non-degenerate hypersurface. We verify this in \Cref{sec:phi}.

\begin{figure}[h]
    \centering
    \includegraphics[scale=.5]{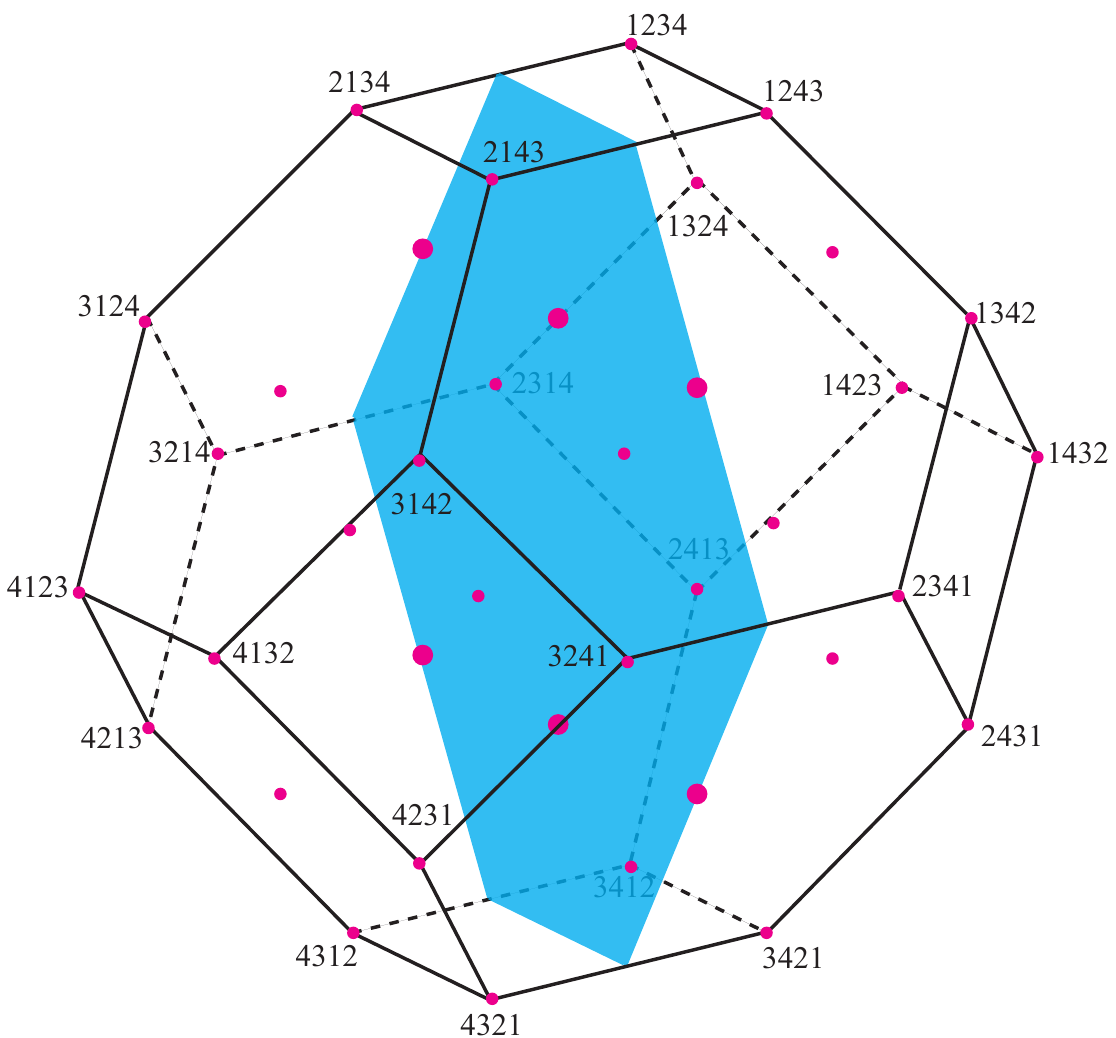} 
    \caption{The fixed polytope $\Pi_4^{(12)}$ is a half-integral hexagon containing 6 lattice points.}
    \label{fig:example}
\end{figure}

The equivariant Ehrhart quasipolynomials and $\phi$-series of $\Pi_3$ and $\Pi_4$ are shown in Tables \ref{tab:n=3} and \ref{tab:n=4}.
\end{example}

\begin{example} \label{ex:line}
Further subtleties already arise in the simple case when $\Pi_n^\sigma$ is a segment; this happens when $\sigma$ has only two cycles of lengths $\ell_1$ and $\ell_2$. 
For even $t$, we simply have
\[
L_{\Pi_n^\sigma}(t) = 
\gcd(\ell_1,\ell_2)t + 1. 
\]
However, for odd $t$ we have
\[
L_{\Pi_n^\sigma}(t) = 
\begin{cases}
\gcd(\ell_1,\ell_2)t + 1 & \textrm{if $\ell_1$ and $\ell_2$ are both odd}, \\
\gcd(\ell_1,\ell_2)t & \textrm{if $\ell_1$ and $\ell_2$ have different parity}, \\
\gcd(\ell_1,\ell_2)t & \textrm{if $\ell_1$ and $\ell_2$ are both even and they have the same $2$-valuation}, \\
0 & \textrm{if $\ell_1$ and $\ell_2$ are both even and they have different $2$-valuations,}
\end{cases}
\]
We invite the reader to verify that this formula follows from \Cref{thm:main theorem}.
\end{example}

\subsection{Organization}

In \Cref{sec:background} we introduce some background on Ehrhart theory and zonotopes.
In \Cref{sec:lattice case} we compute the Ehrhart polynomial of the fixed polytope $\Pi_n^\sigma$ when it is a lattice polytope, and in \Cref{sec:Ehrhart quasipolynomial} we compute its Ehrhart quasipolynomial in general, proving \Cref{thm:main theorem}. 
In \Cref{sec:phi} we compute the equivariant $\phi$-series $\phi[z]$ for permutahedra and we verify Stapledon's four conjectures on equivariant Ehrhart theory in this special case; most importantly, his Effectiveness Conjecture (\Cref{thm:phi polynomial}).

\section{Preliminaries}
\label{sec:background}

\subsection{Ehrhart quasipolynomials}

Let $P$ be a convex polytope in $\R^n$.
We say that $P$ is a \emph{rational polytope} if all of its vertices are in $\Q^n$.
The \emph{lattice point enumerator} of $P$ is the function $L_P:\Z_{\ge1}\to\Z_{\ge0}$ given by
    $$
    L_P(t) := |tP\cap\Z^n|;
    $$
that is, $L_P(t)$ is the number of integer points in the $t^{\text{th}}$ dilate of $P$.
A function $f:\Z \rightarrow \R$ is a \emph{quasipolynomial} if there exists a period $d$ and polynomials $f_0, f_1, \ldots, f_{d-1}$ such that $f(n) = f_i(n)$ whenever $n \equiv i$ (mod $d$).

\begin{theorem}[Ehrhart's Theorem \cite{Ehrhart1962}]\label{thm:ehrharts theorem rational}
If $P$ is a rational polytope, then $L_P(t) := |tP\cap\Z^n|$ is a quasipolynomial in $t$. Its degree is $\dim P$ and  its period divides the least common multiple of the denominators of the coordinates of the vertices of $P$. 
\end{theorem}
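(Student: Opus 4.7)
The plan is to follow the standard cone/generating-function route, reducing first to simplices and then reading off the quasipolynomial structure from a rational Hilbert series.

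First I would triangulate $P$ into rational simplices using only the vertices of $P$, so no new denominators are introduced. By inclusion-exclusion (or by triangulating half-open cells so the decomposition is disjoint), $L_P(t)$ becomes an integer linear combination of $L_\Delta(t)$ for rational simplices $\Delta$ whose vertices lie in the vertex set of $P$. Hence it suffices to prove the theorem for such a $\Delta$ with vertices $v_0, \ldots, v_d \in \Q^n$ whose coordinates have a common denominator $D$ dividing the lcm of denominators of the vertices of $P$.

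Next I would pass to the homogenizing cone. Set $w_i = (1, v_i) \in \R^{n+1}$ so that $D w_i \in \Z^{n+1}$, and let $C = \operatorname{cone}(w_0, \ldots, w_d)$. The lattice points of $C$ at height $t$ are in bijection with lattice points of $t\Delta$, so the Ehrhart series is
\begin{equation*}
    \sum_{t\ge 0} L_\Delta(t)\, z^t \;=\; \sum_{x \in C \cap \Z^{n+1}} z^{\mathrm{ht}(x)}.
\end{equation*}
Since $\Delta$ is a simplex, every $x \in C$ has a unique decomposition $x = p + \sum_{i=0}^d n_i (D w_i)$ with $n_i \in \Z_{\ge 0}$ and $p$ in the half-open parallelepiped $\Pi = \{\sum \lambda_i(Dw_i) : 0 \le \lambda_i < 1\}$. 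Collecting terms,
\begin{equation*}
    \sum_{t\ge 0} L_\Delta(t)\, z^t \;=\; \frac{h_\Delta(z)}{(1-z^D)^{d+1}}, \qquad h_\Delta(z) := \sum_{p \in \Pi \cap \Z^{n+1}} z^{\mathrm{ht}(p)},
\end{equation*}
a rational function whose denominator is $(1 - z^D)^{d+1}$ and whose numerator is a polynomial. Expanding in partial fractions (or directly using the identity $(1-z^D)^{-(d+1)} = \sum_{k\ge 0}\binom{k+d}{d} z^{kD}$) shows that, on each residue class $t \equiv r \pmod{D}$, the coefficients $L_\Delta(t)$ agree with a polynomial in $t$ of degree at most $d$. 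This gives the quasipolynomial structure with period dividing $D$, and the leading term in any residue class recovers $\vol_d(\Delta)\cdot t^d/d!\,$, so the degree equals $\dim \Delta = d$.

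Finally I would reassemble: inclusion-exclusion over the triangulation yields a quasipolynomial expression for $L_P(t)$ with period dividing the lcm of the $D$'s, which is the lcm of denominators of the vertices of $P$, and the degree equals $\dim P$ because the top-dimensional simplices contribute $\vol_d(P)\, t^d/d!$ and lower-dimensional boundary terms have smaller degree. The main bookkeeping obstacle is making sure the triangulation and its inclusion-exclusion really preserve the denominator bound; this is why it is important to triangulate using only existing vertices of $P$ (e.g.\ via a pulling triangulation), so that no subdivision point with worse denominator is ever introduced. Everything else is a direct manipulation of the rational generating function above.
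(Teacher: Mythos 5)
The paper does not prove this statement at all: it is quoted as Ehrhart's classical theorem with a citation to Ehrhart (1962) and used as a black box, so there is no in-paper argument to compare yours against. What you have written is the standard proof (Ehrhart's original route, as presented in Stanley or Beck--Robins): pull a triangulation from the vertices of $P$ so no new denominators appear, pass to the homogenization cone of each simplex, tile the cone by translates of the half-open fundamental parallelepiped of the generators $Dw_i$, and read off $\sum_t L_\Delta(t)z^t = h_\Delta(z)/(1-z^D)^{d+1}$. This is essentially correct as a sketch; the one implicit point worth making explicit is that the heights occurring in $h_\Delta$ are strictly less than $D(d+1)$, which is exactly what guarantees the coefficients agree with a quasipolynomial from $t=0$ onward, with period dividing $D$ and hence dividing the lcm of the vertex denominators of $P$.

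One caveat: your claim that the leading term \emph{in any residue class} is $\vol_d(\Delta)\,t^d/d!$ is false when $\Delta$ is not full-dimensional. For the segment from $(1/2,0)$ to $(1/2,1)$ in $\R^2$, the odd constituents are identically zero -- and this is precisely the phenomenon this paper is concerned with, since the fixed polytopes $\Pi_n^\sigma$ can have affine spans missing the lattice in odd dilates (\Cref{lem:affine_subspace}). This does not break your proof: the degree of a quasipolynomial is the maximum degree over its constituents, and the residue class $t \equiv 0 \pmod{D}$ (where $t\Delta$, resp.\ $tP$, is a lattice polytope) already realizes degree $d$ with positive leading coefficient, while the lower-dimensional faces entering the inclusion--exclusion contribute strictly smaller degree. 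So the quasipolynomiality, the period bound, and the degree claim all stand, but you should argue the degree via that single residue class rather than classwise.
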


\subsection{Zonotopes}

Let $V$ be a finite set of vectors in $\R^n$.
The \emph{zonotope} generated by $V$, denoted $Z(V)$, is defined to be the Minkowski sum of the line segments connecting the origin to $\bv$ for each $\bv\in V$.
We will also adapt the same notation to refer to any translation of $Z(V)$, that is, the Minkowski sum of any collection of line segments whose direction vectors are the elements of $V$.
Zonotopes have a combinatorial decomposition that is useful when calculating volumes and counting lattice points.  
The following result is due to Shephard.

\begin{proposition}[{\cite[Theorem~54]{Shephard1974}}]
A zonotope $Z(V)$ can be subdivided into half-open parallelotopes that are in bijection with the linearly independent subsets of $V$.
\end{proposition}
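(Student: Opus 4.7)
I would proceed by induction on $n := |V|$. The base case $V = \emptyset$ is immediate: $Z(\emptyset) = \{0\}$ is a single point, a zero-dimensional half-open parallelotope matching the unique empty independent subset.

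For the inductive step, write $V = V' \sqcup \{v\}$ and apply the inductive hypothesis to obtain a decomposition $Z(V') = \bigsqcup_I P_I^{V'}$ into half-open parallelotopes indexed by the linearly independent subsets $I \subseteq V'$. Since $Z(V) = Z(V') + [0,1]v$, each cell $P_I^{V'}$ contributes a prism $P_I^{V'} + [0,1]v$, and I would refine these prisms according to whether $v \in \Span(I)$. If $v \notin \Span(I)$, then $P_I^{V'} + [0,1)v$ is itself a half-open parallelotope of dimension $|I|+1$, naturally indexed by $I \cup \{v\}$, while the residual slice $P_I^{V'} + v$ at the top is a translate of $P_I^{V'}$ that must be reassigned to a cell indexed by an independent subset. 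If $v \in \Span(I)$, then the prism has the same dimension as $P_I^{V'}$ and must be repartitioned into half-open parallelotopes parallel to independent subsets.

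To make the reassignments globally consistent, I would fix a generic linear functional $\omega$ on $\R^n$ satisfying $\omega(u) \neq 0$ for every $u \in V$ and with all signed sums of elements of $V$ taking distinct $\omega$-values. This $\omega$ provides a tie-breaking rule that specifies, for each linearly independent $I \subseteq V$, a canonical shift vector $s_I$ and the associated half-open parallelotope
\[
P_I := s_I + \sum_{u \in I}[0,1)\,u.
\]
The decomposition $Z(V) = \bigsqcup_I P_I$ would then be verified by producing, for every $p \in Z(V)$, a unique representation $p = s_I + \sum_{u \in I} t_u u$ with each $t_u \in [0,1)$.

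The main obstacle is this final uniqueness-and-coverage step: every point of $Z(V)$ must admit exactly one such canonical representation. The cleanest path is to consider, for each $p \in Z(V)$, the polytope of representations $p = \sum_{u \in V} t_u u$ with $t_u \in [0,1]$ and use $\omega$ to single out a canonical vertex; the coordinates lying in $(0,1)$ at that vertex then index the unique independent set $I$ with $p \in P_I$. Genericity of $\omega$ rules out ties, and the bijection between cells and linearly independent subsets of $V$ follows by construction.
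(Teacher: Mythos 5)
First, note that the paper does not prove this proposition at all --- it is quoted from Shephard (and used as a black box), so there is no in-paper argument to match; your proposal must therefore stand on its own, and as written it has genuine gaps. The decisive one is the selection device in your final step. You propose to pick, for each $p\in Z(V)$, a canonical vertex of the fiber polytope $\{t\in[0,1]^V:\sum_{u\in V}t_u u=p\}$ using a generic linear functional $\omega$ on $\R^n$. But any functional on $\R^n$ is constant on this fiber: its natural pullback to coefficient space is $t\mapsto\sum_u t_u\,\omega(u)=\omega\bigl(\sum_u t_u u\bigr)=\omega(p)$. So $\omega$ cannot distinguish points of the fiber, and your genericity conditions ($\omega(u)\neq 0$, distinct signed sums) are imposed on the wrong object. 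What the argument needs is genericity in the coefficient space $\R^V$ --- a generic weight vector $(c_u)_{u\in V}$, or equivalently a fixed linear order on $V$ with lexicographic selection --- and the genericity conditions must be phrased there (e.g.\ $c$ not orthogonal to any circuit of $V$).

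Even after repairing the selection rule, the proposal asserts rather than proves the two facts on which the bijection rests: (i) at a vertex of the fiber polytope the coordinates lying in $(0,1)$ form a linearly independent subset of $V$ (easy --- otherwise one could perturb along a dependence and stay in the fiber and the cube --- but it must be said); and, crucially, (ii) the set of coordinates forced to equal $1$ at the canonical vertex depends only on the independent set $I$, not on $p$. Without (ii), the points assigned to $I$ could form several translates of $\sum_{u\in I}[0,1)u$ rather than a single cell $s_I+\sum_{u\in I}[0,1)u$, which would destroy both the bijection with independent subsets and the volume count $\vol(\hobox S)$ that the paper needs for Stanley's formula. Your inductive preamble stalls at exactly the analogous points (``must be reassigned,'' ``must be repartitioned''). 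A clean way to finish is the standard induction: write $Z(V'\cup\{v\})=Z(V')\,\sqcup\,\bigcup_F\bigl(F+(0,1]v\bigr)$, where $F$ runs over the faces of $Z(V')$ visible in the direction $v$; each such $F$ is a translate of a zonotope generated by a subset $W\subseteq V'$ with $v\notin\Span(W)$, so the inductive hypothesis applied to these faces produces precisely the cells indexed by independent sets containing $v$, while $Z(V')$ supplies those not containing $v$. Alternatively, carry out your fiber-polytope argument with a genuinely generic weight on the coordinates and prove (i) and (ii) explicitly.
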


A linearly independent subset $S\subseteq V$ corresponds under this bijection to the half-open parallelotope 
    $$
    \hobox S :=\sum_{\bv\in S} [\bo,\bv).
    $$
Stanley gave a combinatorial formula for the Ehrhart polynomial of a lattice zonotope.

\begin{theorem}[{\cite[Theorem~2.2]{Stanleyzonotope}}] \label{thm:Stanley}
Let $Z(V)$ be a lattice zonotope generated by $V$. Then
    \begin{equation}\label{eq:stanley_formula}
        L_{Z(V)}(t) = \sum_{\substack{S\subseteq V\\\text{lin. indep.}}}\vol(\hobox S)\cdot t^{|S|}.
    \end{equation}
    \end{theorem}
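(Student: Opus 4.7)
The plan is to combine the half-open parallelotope decomposition of $Z(V)$ from Shephard's proposition above with a direct lattice-point count on each piece. The key lemma is that for a linearly independent subset $S = \{\bv_1, \ldots, \bv_k\}$ of lattice vectors, $|t\hobox S \cap \Z^n| = \vol(\hobox S) \cdot t^k$; establishing this is the main technical step.

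First I invoke Shephard's decomposition to partition $Z(V) = \bigsqcup_{S \text{ lin. indep.}} Q_S$, where each $Q_S$ is a translate of $\hobox S$. Since $Z(V)$ is a lattice zonotope, the standard Shephard construction anchors each piece at a vertex of a sub-zonotope of $Z(V)$, so every translation vector $\bx_S$ lies in $\Z^n$ and $Q_S = \bx_S + \hobox S$. Dilation by $t$ preserves this partition, and translation by the lattice vector $t\bx_S$ preserves lattice-point counts, yielding
\[
L_{Z(V)}(t) = \sum_{S \text{ lin. indep.}} |t\hobox S \cap \Z^n|.
\]

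For the lemma, set $L = \Z\bv_1 + \cdots + \Z\bv_k$ and $L_S = \Span(S) \cap \Z^n$, so that $L$ is a finite-index sublattice of $L_S$. The half-open parallelotope $\hobox S$ is a fundamental domain for $L$ in $\Span(S)$, so the relative volume $\vol(\hobox S)$, the index $[L_S : L]$, and $|\hobox S \cap \Z^n|$ all coincide. Then $t\hobox S$ splits as a disjoint union of $t^k$ lattice translates of $\hobox S$, namely by the vectors $\sum_i n_i \bv_i$ with $0 \le n_i < t$, each containing $\vol(\hobox S)$ lattice points, so $|t\hobox S \cap \Z^n| = \vol(\hobox S) \cdot t^k$. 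Summing over $S$ yields the formula. The main subtlety is the three-way identification of relative volume, lattice index, and lattice-point count in the half-open parallelotope; once this is in hand, the rest of the argument is a clean consequence of Shephard's decomposition and the preservation of lattice-point counts under integer translations.
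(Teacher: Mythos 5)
Your argument is correct. Note that the paper does not prove this statement at all---it is quoted from Stanley \cite{Stanleyzonotope}---so there is no internal proof to compare against; what you have written is essentially the standard proof (and Stanley's own): Shephard's half-open decomposition, lattice translates, and the identification of the lattice-point count of a half-open lattice parallelotope with its relative volume via the index $[L_S:L]$, with dilation splitting $t\hobox S$ into $t^{|S|}$ lattice translates of $\hobox S$. The only step that goes beyond the bare statement of Shephard's proposition as quoted in the paper is your claim that the translation vectors $\bx_S$ can be taken in $\Z^n$; this is indeed part of the standard construction (each piece is anchored at a sum of a subset of the generators, which are lattice vectors), and the paper itself uses exactly this fact when it writes $\bv_F\in\Z^n$ in \eqref{eq:boxF}, so the appeal is legitimate, though in a fully self-contained write-up you would want to recall that construction explicitly. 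Your key lemma is also the same mechanism the paper later isolates as \Cref{lem:num_lattice_points} together with \cite[Lemma 9.2]{BeckRobins} in the proof of \Cref{thm:main theorem}.
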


\noindent In the statement above and throughout the paper, volumes are normalized so that any primitive lattice parallelotope has volume $1$.

\subsection{Fixed polytopes of the permutahedron}

The symmetric group $S_n$ acts on $\R^n$ by permuting coordinates of points.
The \emph{permutahedron} $\Pi_n$ is the convex hull of the $S_n$-orbit of the point $(1,2,\dots,n)\in\R^n$; that is, of the $n!$ permutations of $[n]$.
 
Let $\sigma \in S_n$ be a permutation with cycles $\sigma_1, \ldots, \sigma_m$; their lengths form a partition $\lambda = (\ell_1,\dots,\ell_m)$ of $n$.
For each cycle $\sigma_k$ of $\sigma$, let $\be_{\sigma_k}=\sum_{i\in\sigma_k}\be_i$.
The \emph{fixed polytope} $\Pi_n^\sigma$ is defined to be the polytope consisting of all points in $\Pi_n$ that are fixed under the action of $\sigma$.
%Our formula for the Ehrhart quasipolynomial of $\Pi_n^\sigma$ uses a few 
We will use a few results from \cite{ASV}, which we now summarize.

\begin{theorem}[{\cite[Theorems~1.2 and 2.12]{ASV}}]\label{thm:vol}
The fixed polytope $\Pi_n^\sigma$ has the following zonotope description:
    \begin{equation}\label{eq:fixed_polytope_zonotope_desc}
        \Pi_n^\sigma = \sum_{1\le i<j\le m}[\ell_i \be_{\sigma_j},\ell_j \be_{\sigma_i}] + \sum_{k=1}^m \frac{\ell_k+1}{2}\be_{\sigma_k}.
    \end{equation}
Its normalized volume is
\begin{equation} \label{eq:volume}
    \vol \Pi_n^\sigma = n^{m-2}\gcd(\ell_1,\dots,\ell_m).
\end{equation}
\end{theorem}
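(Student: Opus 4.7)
My plan is to prove the zonotope description first and then use it with Stanley's formula to compute the volume.

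\textbf{Step 1: Reduce to a projection.} I would consider the averaging operator
\[
\pi_\sigma(x) = \frac{1}{|\langle\sigma\rangle|}\sum_{k\ge 0}\sigma^k\cdot x,
\]
which is the linear projection from $\R^n$ onto the fixed subspace $V^\sigma = \Span(\be_{\sigma_1},\ldots,\be_{\sigma_m})$. I claim $\Pi_n^\sigma = \pi_\sigma(\Pi_n)$: the inclusion $\supseteq$ is clear because $\pi_\sigma$ is the identity on $V^\sigma\supseteq \Pi_n^\sigma$; the inclusion $\subseteq$ follows because $\Pi_n$ is convex and $\sigma$-invariant, so $\pi_\sigma$ sends $\Pi_n$ into $\Pi_n\cap V^\sigma=\Pi_n^\sigma$. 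Thus I have turned the description of $\Pi_n^\sigma$ into the computation of the image of a known zonotope under a linear map.

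\textbf{Step 2: Push the standard zonotope description through $\pi_\sigma$.} Starting from the classical presentation $\Pi_n = \sum_{1\le i<j\le n}[\bo,\be_i-\be_j] + (1,2,\ldots,n)$ and using the fact that Minkowski sums commute with linear maps, I would compute $\pi_\sigma(\be_i) = \frac{1}{\ell_a}\be_{\sigma_a}$ for $i$ in the cycle $\sigma_a$. Segments whose endpoints lie in the same cycle collapse to a point. For each unordered pair of distinct cycles $(\sigma_a,\sigma_b)$, the $\ell_a\ell_b$ segments $[\bo,\be_i-\be_j]$ with $i\in\sigma_a,\,j\in\sigma_b$ project onto parallel segments whose Minkowski sum is a single segment proportional to $\ell_b\be_{\sigma_a}-\ell_a\be_{\sigma_b}$ of the correct length. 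Absorbing the image of the shift $(1,2,\ldots,n)$ together with translations coming from the choice of segment representatives yields the asserted constant term $\sum_k \frac{\ell_k+1}{2}\be_{\sigma_k}$, and one recovers exactly \eqref{eq:fixed_polytope_zonotope_desc}.

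\textbf{Step 3: Identify the matroid and apply Stanley.} The zonotope generators $v_{ij} := \ell_j\be_{\sigma_i}-\ell_i\be_{\sigma_j}$ for $1\le i<j\le m$ satisfy the same linear dependencies as the edges of $K_m$: a subset is linearly independent precisely when the corresponding edges form a forest. By \Cref{thm:Stanley},
\[
\vol \Pi_n^\sigma = \sum_{T \text{ spanning tree of } K_m} \vol(\hobox T).
\]
For each spanning tree $T$, I would compute $\vol(\hobox T)$ in the lattice $M = \{c\in\Z^m:\sum_k\ell_k c_k=0\}$ (the translation to the origin of $\aff\Pi_n^\sigma\cap\Z^n$). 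Using Cauchy--Binet on the $(m-1)\times m$ matrix whose rows are $v_{ij}$, $(i,j)\in T$, together with the fact that $M$ has covolume $|\ell|/d$ in its ambient hyperplane (where $d=\gcd(\ell_1,\ldots,\ell_m)$), the determinants of the signed-incidence-type $(m-1)\times(m-1)$ minors can be read off tree-theoretically to give
\[
\vol(\hobox T) = d\cdot \prod_{i=1}^m \ell_i^{\deg_T(i)-1}.
\]

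\textbf{Step 4: Sum via Cayley.} Applying the weighted Cayley identity $\sum_{T}\prod_i x_i^{\deg_T(i)} = x_1\cdots x_m(x_1+\cdots+x_m)^{m-2}$ at $x_i=\ell_i$ gives $\sum_{T}\prod_i\ell_i^{\deg_T(i)-1} = n^{m-2}$, and hence $\vol\Pi_n^\sigma = n^{m-2}\gcd(\ell_1,\ldots,\ell_m)$. The main obstacle I expect is Step 3: correctly identifying the lattice $M$ and showing that the per-tree volume carries the uniform factor $d=\gcd(\ell_1,\ldots,\ell_m)$, which is the source of the $\gcd$ in the final answer and which disappears precisely when $M$ is primitive in $\Z^m$.
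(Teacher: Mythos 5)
The paper does not prove this theorem at all: it is imported verbatim from \cite{ASV} (Theorems~1.2 and~2.12 there), and the supporting facts your Step~3 needs are likewise quoted later as \Cref{lem:lin_indep_sets}, equation \eqref{eq:volume_forest}, and the weighted Cayley identity \eqref{eq:tree_sum}. So there is no in-paper argument to compare against; judged on its own, your outline is correct and is essentially a reconstruction of the cited source's proof. Step~1 is right (convexity plus $\sigma$-invariance gives $\pi_\sigma(\Pi_n)\subseteq\Pi_n\cap V^\sigma=\Pi_n^\sigma$, and $\pi_\sigma$ fixes $\Pi_n^\sigma$ pointwise), and Step~2 correctly collapses within-cycle segments and merges the $\ell_a\ell_b$ cross-cycle segments into $[\ell_a\be_{\sigma_b},\ell_b\be_{\sigma_a}]$; the constant term you ``absorb'' is most cleanly nailed down by matching centers of the two zonotopes, since the coefficient of $\be_{\sigma_k}$ in $\sum_{i<j}\tfrac12(\ell_i\be_{\sigma_j}+\ell_j\be_{\sigma_i})+\sum_k\tfrac{\ell_k+1}{2}\be_{\sigma_k}$ is $\tfrac{n-\ell_k}{2}+\tfrac{\ell_k+1}{2}=\tfrac{n+1}{2}$, agreeing with $\pi_\sigma(\tfrac{n+1}{2}\mathbbm{1})$. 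The only place where real work is hidden is Step~3: the claim that each maximal minor of the tree matrix (deleting column $k$) equals $\pm\ell_k\prod_i\ell_i^{\deg_T(i)-1}$ is asserted rather than proved, and it---together with the identification of the ambient lattice as $M=\{c\in\Z^m:\sum_k\ell_kc_k=0\}$ with primitive normal $\ell/d$---is exactly where the $\gcd$ enters; this is \cite[Lemma~3.3]{ASV} and is provable by induction on leaves of $T$, so it is a gap of exposition, not of substance. Two minor cautions: $\Pi_n^\sigma$ is generally not a lattice zonotope, so for the volume you should invoke Shephard's parallelotope decomposition directly (volume is translation-invariant) rather than \Cref{thm:Stanley} as literally stated; and ``covolume $\lVert\ell\rVert/d$'' should be made precise as the Euclidean covolume of $\ker(\ell\cdot)\cap\Z^m$ before dividing it into the Cauchy--Binet volume.
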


\begin{corollary}\label{cor:oddparts}
The fixed polytope $\Pi_n^\sigma$ is integral or half-integral. 
It is a lattice polytope if and only if all cycles of $\sigma$ have odd length.
\end{corollary}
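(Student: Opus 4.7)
The plan is to read off the arithmetic structure of $\Pi_n^\sigma$ directly from the zonotope description \eqref{eq:fixed_polytope_zonotope_desc} of \Cref{thm:vol}. Write $\Pi_n^\sigma = Z + t$, where
\[
Z = \sum_{1\le i<j\le m}[\ell_i \be_{\sigma_j},\ell_j \be_{\sigma_i}] \qquad \text{and} \qquad t = \sum_{k=1}^m \frac{\ell_k+1}{2}\be_{\sigma_k}.
\]
The first observation is that $Z$ is a lattice zonotope: each segment $[\ell_i \be_{\sigma_j},\ell_j \be_{\sigma_i}]$ has an integer basepoint and integer edge direction $\ell_j \be_{\sigma_i} - \ell_i \be_{\sigma_j}$, so every vertex of $Z$, being a Minkowski sum of endpoints of these segments, lies in $\Z^n$.

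The proof then reduces to analyzing the translation vector $t$. Since the cycles $\sigma_1,\dots,\sigma_m$ partition $[n]$, the vectors $\be_{\sigma_1},\dots,\be_{\sigma_m}$ have disjoint supports, and $t$ has the value $(\ell_k+1)/2$ in each coordinate indexed by an element of $\sigma_k$. This value is an integer when $\ell_k$ is odd and a half-integer when $\ell_k$ is even. Therefore $2t \in \Z^n$, which gives $2\Pi_n^\sigma \subseteq \Z^n$ after a vertex check, proving the half-integrality claim.

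For the "lattice polytope iff all cycles have odd length" part, both directions now follow quickly. If every $\ell_k$ is odd, then $t \in \Z^n$, so $\Pi_n^\sigma = Z + t$ is a lattice translate of a lattice polytope, hence a lattice polytope. Conversely, if some $\ell_k$ is even, pick any vertex $v$ of $Z$; then $v \in \Z^n$, but $v+t$ has coordinate $v_i + (\ell_k+1)/2 \notin \Z$ for any $i \in \sigma_k$, so the corresponding vertex of $\Pi_n^\sigma$ is not a lattice point.

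There is no real obstacle here; the only subtle point is justifying that $\Pi_n^\sigma$ actually has a non-lattice vertex whenever some $\ell_k$ is even, and this is handled by the disjoint-support observation, which guarantees that the half-integer contribution of $t$ in the coordinates of $\sigma_k$ cannot be cancelled by any integer contribution from a vertex of $Z$.
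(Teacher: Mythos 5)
Your argument is correct and follows essentially the same route as the paper: both read off from the zonotope description \eqref{eq:fixed_polytope_zonotope_desc} that $\Pi_n^\sigma$ is a lattice zonotope translated by $\sum_k \frac{\ell_k+1}{2}\be_{\sigma_k}$, and use the disjoint supports (equivalently, linear independence) of the $\be_{\sigma_k}$ to conclude that the vertices are lattice points exactly when every $\ell_k$ is odd. The only nitpick is the phrase ``$2\Pi_n^\sigma \subseteq \Z^n$,'' which should read that the \emph{vertices} of $2\Pi_n^\sigma$ lie in $\Z^n$; the surrounding argument makes clear that this is what you mean.
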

\begin{proof}
From \eqref{eq:fixed_polytope_zonotope_desc} and from the fact that all of the $\be_{\sigma_i}$ in \eqref{eq:fixed_polytope_zonotope_desc} are linearly independent, we can see that all the vertices of $\Pi_n^\sigma$ will be in the integer lattice if and only if $\ell_i+1$ is even for all $i$.
\end{proof}

Equation \eqref{eq:fixed_polytope_zonotope_desc} also shows that $\Pi_n^\sigma$ is a rational translation of the zonotope $Z(V)$ where \[V=\{\ell_i\be_{\sigma_j}-\ell_j\be_{\sigma_i}:1\le i<j\le m\}.\]
The following result characterizes the linearly independent subsets of $V$.

\begin{lemma}[{\cite[Lemma~3.2]{ASV}}]\label{lem:lin_indep_sets}
The linearly independent subsets of $V$ are in bijection with forests with vertex set $[m]$, where the vector $\ell_i\be_{\sigma_j}-\ell_j\be_{\sigma_i}$ corresponds to the edge connecting vertices $i$ and $j$.
\end{lemma}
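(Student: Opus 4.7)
The plan is to view this as a graphic-matroid statement twisted by the positive weights $\ell_1,\dots,\ell_m$. First I would observe that the vectors $\be_{\sigma_1},\dots,\be_{\sigma_m}$ are linearly independent in $\R^n$ because the cycles $\sigma_1,\dots,\sigma_m$ partition $[n]$ into disjoint supports. Hence every element of $V$ lies in the $m$-dimensional subspace they span, and we may work with the coordinate basis $\be_{\sigma_1},\dots,\be_{\sigma_m}$. Since the assignment $\ell_i\be_{\sigma_j}-\ell_j\be_{\sigma_i}\leftrightarrow \{i,j\}$ is clearly a bijection between $V$ and the edge set of $K_m$, I only need to check that a subset $S\subseteq V$ is linearly independent if and only if the corresponding edge set $E(S)\subseteq E(K_m)$ is a forest on $[m]$.

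For the forward direction (forest $\Rightarrow$ independent), I would argue by induction on $|E(S)|$, peeling off a leaf. If the graph $([m],E(S))$ contains a leaf $v$, let $e=\{u,v\}$ be its unique incident edge. Then $\be_{\sigma_v}$ appears only in the vector $\pm(\ell_u\be_{\sigma_v}-\ell_v\be_{\sigma_u})$, with nonzero coefficient $\pm\ell_u\neq 0$ (since $\ell_u$ is a cycle length, hence positive). In any dependence $\sum_{e'\in E(S)}c_{e'}v_{e'}=0$, this forces $c_e=0$, and one concludes by applying the inductive hypothesis to the forest $E(S)\setminus\{e\}$. If a forest has no leaf it is empty, and the base case is trivial.

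For the reverse direction (cycle $\Rightarrow$ dependent), it suffices to exhibit an explicit linear dependence on the edges of any cycle $i_1,i_2,\dots,i_k,i_1$ of $E(S)$. Orienting the cycle consistently and writing $v_a=\ell_{i_a}\be_{\sigma_{i_{a+1}}}-\ell_{i_{a+1}}\be_{\sigma_{i_a}}$ for $a=1,\dots,k$ (indices mod $k$), I would verify that
\[
\sum_{a=1}^k \frac{1}{\ell_{i_a}\ell_{i_{a+1}}}\,v_a \;=\; 0,
\]
by checking that the coefficient of each basis vector $\be_{\sigma_{i_b}}$ collapses: it receives $\tfrac{1}{\ell_{i_{b-1}}\ell_{i_b}}\cdot\ell_{i_{b-1}}=\tfrac{1}{\ell_{i_b}}$ from $v_{b-1}$ and $-\tfrac{1}{\ell_{i_b}\ell_{i_{b+1}}}\cdot\ell_{i_{b+1}}=-\tfrac{1}{\ell_{i_b}}$ from $v_b$, which cancel. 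Together the two directions give the claimed bijection.

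The only mildly delicate point is keeping track of signs when orienting the cycle (the original vectors in $V$ are labeled with $i<j$, so going around a cycle one may traverse some edges backwards); this is handled by absorbing signs into the coefficients $c_a$, and does not affect the argument. The overall approach is thus a weighted variant of the standard proof that the graphic matroid of $K_m$ has forests as its independent sets, with the weights $\ell_i>0$ ensuring nothing degenerates.
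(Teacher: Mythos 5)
Your proof is correct and complete: the leaf-peeling induction shows that forests give independent sets, your explicit circuit relation does telescope to zero (so any cycle gives a dependence), and together these show $S\subseteq V$ is independent exactly when the corresponding edge set is acyclic. Note that the paper itself gives no proof of this lemma --- it is quoted from [ASV, Lemma~3.2] --- but your argument is the standard weighted graphic-matroid one; a slightly quicker route is to observe that scaling each vector by the nonzero constant $1/(\ell_i\ell_j)$ does not change which subsets are independent, and in the basis $\mathbf{f}_k = \tfrac{1}{\ell_k}\be_{\sigma_k}$ the configuration becomes $\{\mathbf{f}_j-\mathbf{f}_i\}$, whose independent subsets are the forests on $[m]$ by the classical fact about the graphic matroid of $K_m$.
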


In light of this lemma, the fixed polytope $\Pi_n^\sigma$ gets subdivided into half-open parallelotopes $\hobox_F$ of the form
    \begin{equation} \label{eq:boxF}
        \hobox_F = \sum_{\{i,j\}\in E(F)} [\ell_i\be_{\sigma_j},\ell_j\be_{\sigma_i}) + \sum_{k=1}^m \frac{\ell_k+1}{2}\be_{\sigma_k} + \bv_F,  \qquad \qquad \bv_F \in \Z^n
    \end{equation}
for each forest $F$ with vertex set $[m]$. 
When $F$ is a tree $T$ we have that
    $$
    \vol (\hobox_T) = \left(\prod_{i=1}^m \ell_i^{\deg_T(i)-1}\right)\gcd(\ell_1,\dots,\ell_m).
    $$
by \cite[Lemma~3.3]{ASV}. 
For a general forest $F$, the parallelotopes $\hobox_T$ corresponding to each connected component $T$ of $F$ live in orthogonal subspaces, so
    \begin{equation}\label{eq:volume_forest}
        \vol(\hobox_F) = \Big( \prod_{j=1}^m\ell_j^{\deg_F(j)-1} \Big)\bigg( \prod_{\substack{\text{conn. comp.}\\ T\text{ of } F}} \gcd(\ell_j:j\in\mathrm{vert}(T)) \bigg).
    \end{equation}

\section{The Ehrhart polynomial of the fixed polytope: the lattice case}
\label{sec:lattice case}

Suppose that $\lambda=(\ell_1,\dots,\ell_m)$ is a partition of $n$ into odd parts and that $\sigma\in S_n$ has cycle type $\lambda$.
Then \Cref{cor:oddparts} says that $\Pi_n^\sigma$ is a lattice zonotope, and hence we can use \eqref{eq:stanley_formula} to write a combinatorial expression for its Ehrhart polynomial.
Recall the definition of $v_\pi$ in \eqref{eq:v_pi}.

\begin{theorem}\label{thm:ehrhart_poly_lattice}
Let $\sigma\in S_n$ have cycle type $\lambda=(\ell_1,\dots,\ell_m)$, where $\ell_i$ is odd for all $i$.
Then
    \begin{equation*}
        L_{\Pi_n^\sigma}(t) = \sum_{\pi \vDash[m]}v_\pi \cdot t^{m-|\pi|}
    \end{equation*}
summing over all partitions $\pi = \{B_1,\dots,B_k\}$ of $[m]$.
\end{theorem}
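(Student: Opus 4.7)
The plan is to directly apply Stanley's formula (\Cref{thm:Stanley}) to the lattice zonotope $\Pi_n^\sigma$, organize the resulting sum by grouping forests according to their connected component structure, and then collapse the contribution of each group using a weighted Cayley-type tree enumeration identity.

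First, since $\lambda$ has only odd parts, \Cref{cor:oddparts} guarantees that $\Pi_n^\sigma$ is a lattice polytope; moreover by \eqref{eq:fixed_polytope_zonotope_desc} it is a translation of the lattice zonotope $Z(V)$. Hence \eqref{eq:stanley_formula} applies. By \Cref{lem:lin_indep_sets}, I can index the linearly independent subsets $S \subseteq V$ by forests $F$ on vertex set $[m]$; a forest with $k$ connected components has $|E(F)| = m-k$ edges. Substituting the volume formula \eqref{eq:volume_forest} gives
\begin{equation*}
    L_{\Pi_n^\sigma}(t) = \sum_{F \text{ forest on } [m]} \left(\prod_{j=1}^m \ell_j^{\deg_F(j)-1}\right) \left(\prod_{\substack{\text{conn. comp.}\\T\text{ of }F}} \gcd(\ell_j : j \in \mathrm{vert}(T))\right) t^{m - c(F)},
\end{equation*}
where $c(F)$ is the number of connected components.

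Next I would partition the sum by the set partition $\pi = \{B_1,\dots,B_k\}$ of $[m]$ determined by the connected components of $F$. Fixing $\pi$, summing over all forests $F$ with this component structure amounts to independently choosing a spanning tree $T_i$ on each block $B_i$. Because the $\prod \ell_j^{\deg_F(j)-1}$ term factors over connected components and $c(F) = |\pi|$, the contribution of $\pi$ is
\begin{equation*}
    t^{m-|\pi|} \prod_{i=1}^k \left(\gcd(\ell_j : j \in B_i) \sum_{T_i \text{ tree on } B_i} \prod_{j \in B_i} \ell_j^{\deg_{T_i}(j)-1}\right).
\end{equation*}

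The key step is now a weighted tree enumeration identity: for any finite set $B$ with weights $(x_j)_{j \in B}$,
\begin{equation*}
    \sum_{T \text{ tree on } B} \prod_{j \in B} x_j^{\deg_T(j)} = \left(\prod_{j \in B}x_j\right)\left(\sum_{j \in B} x_j\right)^{|B|-2},
\end{equation*}
which follows from the multinomial form of Cayley's formula (the number of trees on $B$ with prescribed degrees $d_j \ge 1$ is $\binom{|B|-2}{d_j-1 : j \in B}$). Dividing both sides by $\prod_{j \in B} x_j$ and specializing $x_j = \ell_j$ yields
\begin{equation*}
    \sum_{T \text{ tree on } B_i} \prod_{j \in B_i} \ell_j^{\deg_{T}(j)-1} = \Big(\sum_{j \in B_i}\ell_j\Big)^{|B_i|-2},
\end{equation*}
with the edge cases $|B_i| \in \{1,2\}$ (a single vertex, a single edge) handled consistently under the convention that the formula produces $\ell_j^{-1}$ and $1$ respectively, canceled appropriately by the $\gcd$ factor. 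Substituting this back, the contribution of $\pi$ becomes exactly $v_\pi \cdot t^{m-|\pi|}$, and summing over all $\pi \vDash [m]$ gives the theorem.

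The principal obstacle is the weighted Cayley identity; everything else is bookkeeping provided by the results quoted from \Cref{sec:background}. The small care required is the edge-case verification that the identity survives the $|B_i|=1$ and $|B_i|=2$ blocks, which is important because otherwise the $\deg-1$ exponents can be negative before the $\gcd$ factor is brought in.
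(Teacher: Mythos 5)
Your proposal is correct and follows essentially the same route as the paper: apply Stanley's zonotope formula via the forest indexing of \Cref{lem:lin_indep_sets} and the volume formula \eqref{eq:volume_forest}, group forests by the set partition of $[m]$ given by their components, and collapse each block via the weighted tree-sum identity. The only cosmetic difference is that you derive that identity from Cayley's multinomial formula (with the $|B_i|\in\{1,2\}$ edge cases checked directly), whereas the paper simply cites it as \cite[Lemma~3.4]{ASV}.
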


\begin{proof}
Combining \Cref{thm:Stanley} with \eqref{eq:volume_forest} gives us the following formula for the Ehrhart polynomial of $\Pi_n^\sigma$:
    \begin{equation}\label{eq:ehrhart_sumoverforests}
        L_{\Pi_n^\sigma}(t) = \sum_{\substack{\text{Forests } F \\\text{on } [m]}} \Big( \prod_{j=1}^m \ell_j^{\deg_F(j)-1} \Big) \cdot \Big(\prod_{\substack{\text{conn. comp.}\\ T\text{ of } F}} \gcd(\ell_j:j\in\mathrm{vert}(T)) \Big) t^{|E(F)|}.
    \end{equation}
We can construct a forest with vertex set $[m]$ by first partitioning $[m]$ into nonempty sets $\{B_1,\dots,B_k\}$ and then choosing a tree with vertex set $B_j$ for each $j$.
The number of edges in such a forest is $m-k$.
Using these observations, we can rewrite \eqref{eq:ehrhart_sumoverforests} as
    \begin{equation*}
        L_{\Pi_n^\sigma}(t) = \sum_{\{B_1,\dots,B_k\}\vDash[m]} \Big(\prod_{i=1}^k\gcd(\ell_j:j\in B_i) \Big)\cdot\Big( \sum_{\substack{\text{Forests }F\\\text{inducing}\\
        \{B_1,\dots,B_k\}}}\prod_{j=1}^m\ell_j^{\deg_F(j)-1} \Big) t^{m-k}.
    \end{equation*}
To complete the proof, it remains to show that for a given partition $\pi=\{B_1,\dots,B_k\}$ of $[m]$, the following identity holds:
    \begin{equation}\label{eq:forest_sum}
        \sum_{\substack{\text{Forests }F\\\text{inducing}\\
        \{B_1,\dots,B_k\}}}\prod_{j=1}^m\ell_j^{\deg_F(j)-1} = \prod_{i=1}^k\Big(\sum_{j\in B_i}\ell_j\Big)^{|B_i|-2}.
    \end{equation}    
This follows from the following identity, found in \cite[Lemma~3.4]{ASV}:
    \begin{equation}\label{eq:tree_sum}
        \sum_{T \text{ tree on }[m]}\prod_{i=1}^m x_j^{\deg_T(j)-1} = (x_1+\dots+x_m)^{m-2}.
    \end{equation}
Using \eqref{eq:tree_sum} we obtain 
    \begin{align*}
        \sum_{\substack{\text{Forests }F\\\text{inducing}\\
        \{B_1,\dots,B_k\}}}\prod_{j=1}^m\ell_j^{\deg_F(j)-1}
        &= \sum_{\substack{\text{Forests }F\\\text{inducing}\\
        \{B_1,\dots,B_k\}}} \prod_{i=1}^k \prod_{j\in B_i}\ell_j^{\deg_F(j)-1}\\
        &= \prod_{i=1}^k \bigg(\sum_{\substack{\text{trees }T\\\text{on } B_i}}\prod_{j\in B_i}\ell_j^{\deg_T(j)-1}\bigg)\\
        &= \prod_{i=1}^k \Big( \sum_{j\in B_i}\ell_j \Big)^{|B_i|-2}
    \end{align*}
as desired.
\end{proof}

\section{The Ehrhart quasipolynomial of the fixed polytope: the general case}
\label{sec:Ehrhart quasipolynomial}

In general, $\Pi_n^\sigma$ is a half-integral polytope.
This means that instead of an Ehrhart polynomial, it has an Ehrhart quasipolynomial with period at most $2$.
As in the lattice case from \Cref{sec:lattice case}, we can decompose $\Pi_n^\sigma$ into half-open parallelotopes.
However, there is a new feature that does not arise in the lattice case: some of the parallelotopes in this decomposition may not contain any lattice points.

\begin{figure}[h]
    \begin{center}
       \scalebox{.7}{ \begin{tikzpicture}
[	back/.style={loosely dotted, thick},
	edge/.style={color=gray!95!black, thick},
	facet/.style={fill=gray!95!black,fill opacity=0.400000},
	mydot/.style={circle,inner sep=1.5pt,circle,draw=black!25!black,fill=white!75!white,thick,anchor=base},
  	vertex/.style={inner sep=1.5pt,circle,draw=black!25!black,fill=black!75!black,thick,anchor=base},
  	point/.style={inner sep=2pt,circle,draw=mymagenta!25!mymagenta,fill=mymagenta!75!mymagenta,thick,anchor=base}	]
  	\begin{scope} %<- added

\coordinate  (A) at (0,2);
\coordinate  (B) at ({sqrt(2)},2); 
\coordinate  (C) at ({sqrt(8)},0);
\coordinate  (D) at ({sqrt(2)},-2);
\coordinate  (E) at (0,-2); 
\coordinate  (F) at (-{sqrt(2)},0);
%\coordinate[vertex]  (G) at (0,0);

\draw[top color=mycyan, bottom color=mycyan, fill opacity=1] (A) to (B) to (C) to (D) to (E) to (F) to (A); 
%\draw (F) to (G) to (B);
%\draw (G) to (D);

\node[point] at ({-sqrt(2/4)},1)  {};
\node[point] at ({3*sqrt(2/4)},1)  {};
\node[point] at ({sqrt(2/4)},1)  {};
\node[point] at ({3*sqrt(2/4)},1)  {};
\node[point] at ({3*sqrt(2/4)},-1)  {};
\node[point] at ({sqrt(2/4)},-1)  {};
\node[point] at ({-sqrt(2/4)},-1)  {};
\end{scope}

%%%%%%%% SECOND IMAGE

\begin{scope}
\coordinate (A) at (6,2);
\coordinate (B) at ({6+sqrt(2)},2); 
\coordinate (C) at ({6+sqrt(8)},0);
\coordinate (D) at ({6+sqrt(2)},-2);
\coordinate (E) at (6,-2); 
\coordinate (F) at ({6-sqrt(2)},0);
\coordinate (G) at ({6+sqrt(2)},0);
\draw[white, top color=mycyan, middle color=mycyan, bottom color=mycyan, fill opacity=1] (F) to (A) to (G) to (E) to (F);
\draw(F) to (A) to (G);
\draw[dashed] (G) to (E) to (F);
\coordinate[mydot] (F) at ({6-sqrt(2)},0);
\coordinate[mydot] (G) at ({6+sqrt(2)},0);

\coordinate[vertex] (H) at ({4.5+sqrt(2)},-2.4);

\coordinate[mydot] (I) at (6.2,-2.5);
\coordinate (J) at ({6.2+sqrt(2)},-2.5);
\draw (I) to (J);

\coordinate[mydot] (K) at (5.7,-2);
\coordinate (L) at ({5.7-sqrt(2)},0);
\draw (K) to (L);

\coordinate (M) at (6.3,2);
\coordinate (N) at ({6.3+sqrt(2)},2); 
\coordinate (O) at ({6.3+sqrt(8)},0);
\coordinate (P) at ({6.3+sqrt(2)},0); 
\draw[white, top color=mycyan, middle color=mycyan, bottom color=mycyan, fill opacity=1] (M) to (N) to (O) to (P) to (M);
\draw (M) to (N) to (O);
\draw[dashed] (M) to (P) to (O);
\coordinate[mydot] (M) at (6.3,2);
\coordinate[mydot] (O) at ({6.3+sqrt(8)},0);

\coordinate[mydot] (Q) at (6.2,-2.1);
\coordinate (R) at ({6.2+sqrt(2)},-0.1);
\draw (Q) to (R);

\coordinate(R) at at ({6.3+sqrt(2)},-0.3); 
\coordinate (S) at ({6.3+sqrt(8)},-0.3);
\coordinate (T) at at ({6.3+sqrt(2)},-2.3); 
\coordinate (U) at (6.3,-2.3);
\draw[white, top color=mycyan, middle color=mycyan, bottom color=mycyan, fill opacity=1] (R) to (S) to (T) to (U) to (R);
\draw (R) to (S) to (T);
\draw[dashed] (T) to (U) to (R);
\coordinate[mydot] (R) at at ({6.3+sqrt(2)},-0.3); 
\coordinate[mydot] (T) at at ({6.3+sqrt(2)},-2.3); 

%\draw (R) to (S) to (T);
%\draw[dashed] (R) to (U) to (T);

\node[point] at ({5.7-sqrt(2/4)},-1)  {};
\node[point] at ({6-sqrt(2/4)},1)  {};
\node[point] at ({6+sqrt(2/4)},1)  {};
\node[point] at ({6.3+sqrt(2/4)+sqrt(8/4)},1)  {};
\node[point] at ({6.2+sqrt(2/4)},-1.1)  {};
\node[point] at ({6.3+sqrt(2/4)+sqrt(8/4)},-1.3)  {};

\end{scope}
\end{tikzpicture}}
    \end{center}
    \caption{Decomposition of the fixed polytope $\Pi_4^{(12)}$ into half-open parallelotopes.}
    \label{fig:half_integral_hexagon} 
\end{figure}
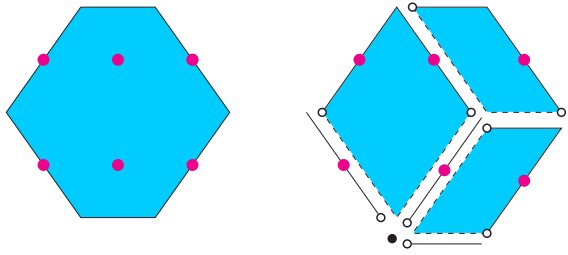

\begin{example}\label{ex:quasipolynomial}
The fixed polytope $\Pi_4^{(1 2)}$ of \Cref{fig:example}, which corresponds to the cycle type $\lambda = (2,1,1)$, is
$$
\Pi_4^{(12)} = [2\be_{3},\be_{12}] + [2\be_{4},\be_{12}] + [\be_{4},\be_{3}]+\frac{3}{2}\be_{12}+\be_{3}+\be_{4}.
 $$
\Cref{fig:half_integral_hexagon} shows its decomposition into parallelograms indexed by the forests on vertex set $\{12, 3, 4\}$.
The three trees give parallelograms with volumes $2, 1, 1$ that contain $2,1,1$ lattice points, respectively. 
The three forests with one edge give segments of volumes $1, 1, 1$ and  $1,1,0$ lattice points, respectively. 
The empty forest gives a point of volume $1$ and $0$ lattice points. 
Hence the Ehrhart quasipolynomial of $\Pi_4^{(12)}$ is
    $$
    L_{\Pi_4^{(1 2)}}(t) = \begin{cases}
    (2+1+1)t^2+(1+1+1)t+1 &\text{if } t \text{ is even}\\
    (2+1+1)t^2+(1+1+0)t+0 &\text{if } t \text{ is odd}
    \end{cases}.
    $$
\end{example}

Following the reasoning of \Cref{ex:quasipolynomial}, we will find the Ehrhart quasipolynomial of $\Pi_n^\sigma$ by examining its decomposition into half-open parallelotopes.
In order to find the number of lattice points in each parallelotope $\hobox_F$, the following observation is crucial.

\begin{lemma}\label{lem:num_lattice_points} \cite{ArdilaBeckMcWhirter, McWhirter}
If $\hobox$ is a half-open lattice parallelotope in $\Z^n$ and $\bv \in \Q^n$, the number of lattice points in $\hobox + \bv$ is
\[
|(\hobox + \bv) \cap \Z^n| = 
\begin{cases}
	\vol(\hobox) & \text{if the affine span of } \hobox+\bv \text{ intersects the lattice } \Z^n \\
	0 & \text{otherwise}
\end{cases}.
\]
\end{lemma}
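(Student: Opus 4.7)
The plan is to split into two cases based on whether the affine span $A := \aff(\hobox + \bv)$ meets the integer lattice. If $A \cap \Z^n = \emptyset$, the claim is immediate: $(\hobox + \bv) \cap \Z^n \subseteq A \cap \Z^n = \emptyset$, so the count is $0$.

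Suppose instead that $A$ contains a lattice point $w$. Translation by $-w$ preserves lattice-point counts, so it suffices to count lattice points in $\hobox + \bu$, where $\bu := \bv - w$ now lies in the linear span $W := A - w$ of the edge vectors $v_1,\dots,v_k$ of $\hobox$. I would introduce the two lattices $L := W \cap \Z^n$ (the ambient lattice in $W$) and $M := \Z\langle v_1,\dots,v_k\rangle \subseteq L$ (generated by the edges). The half-open parallelotope $\hobox = \{\sum t_i v_i : 0 \le t_i < 1\}$ tiles $W$ under translation by $M$, and this tiling property is preserved under any translation, so $\hobox + \bu$ is again a fundamental domain for $M$ in $W$.

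To finish, I would invoke the standard fact that any fundamental domain $F$ for $M$ in $W$ contains exactly $[L:M]$ points of $L$, via the bijection $L/M \to F \cap L$ that sends a coset $x + M$ to its unique representative lying in $F$. Combined with the identification $[L:M] = \vol(\hobox)$ under the normalization in which primitive lattice parallelotopes have volume $1$, this yields the desired count of $\vol(\hobox)$. The main potential pitfall is the bookkeeping between the two lattices $L$ and $M$ and the reduction from the affine to the linear setting via the integer translation by $w$; beyond that I expect no conceptual obstacle, since the lemma is essentially a rephrasing of the fact that translates of fundamental domains remain fundamental domains.
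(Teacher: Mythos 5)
Your argument is correct. Note that the paper does not prove this lemma internally --- it is quoted from the cited references --- but your proof is essentially the standard one used there: reduce to the linear setting by translating by a lattice point $w$ of the affine span, observe that the translated half-open parallelotope remains a fundamental domain for the sublattice $M$ generated by its edge vectors inside $W = \aff(\hobox+\bv)-w$, and count $|F \cap L| = [L:M]$ for $L = W \cap \Z^n$, which equals $\vol(\hobox)$ precisely because the paper normalizes volumes so that a primitive parallelotope of the induced lattice $L$ has volume $1$ --- a point you correctly make explicit.
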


We now apply \Cref{lem:num_lattice_points} to the parallelotopes $\hobox_F$. Surprisingly,  whether $\aff(\hobox_F)$ contains lattice points does not depend on the forest $F$, but only on the set partition $\pi$ of the vertex set $[m]$ induced by the connected components of $F$. To make this precise we need a definition.
Recall that the $2$\emph{-valuation} of a positive integer is the largest power of $2$ dividing that integer; for example, $\val(24)=3$.

\begin{definition}\label{def:compatibility}
Let $\lambda=(\ell_1,\dots,\ell_m)$ be a partition of the integer $n$.
A set partition $\pi=\{B_1,\dots,B_k\}$ of $[m]$ is called $\lambda$-\emph{compatible} if for each block $B_i\in \pi$, at least one of the following conditions holds:
    \begin{enumerate}[(i)]
        \item $\ell_j$ is odd for some $j\in B_i$, or
        \item the minimum $2$-valuation among $\{\ell_j:j\in B_i\}$ occurs an even number of times.
    \end{enumerate}
\end{definition}

\begin{example}
Let $\lambda = (\ell_1,\ell_2,\ell_3)$ and $\val(\ell_i) = v_i$ for $i=1,2,3$, and assume that $v_1\geq v_2 \geq v_3$.
\Cref{tab:lambda_compatibility} shows which partitions of $[3]$ are $\lambda$-compatible depending on $\val(\lambda)$.
    \begin{table}[h]
        \begin{center}
             \begin{tabular}{| c | c c c c c|} 
 \hline
  & $123$ & $12\vert3$ & $13\vert2$ & $23\vert1$ & $1\vert2\vert3$ \\ [0.5ex] 
 \hline\hline 
 $v_1=v_2=v_3=0$& $\bullet$ & $\bullet$ & $\bullet$ & $\bullet$ & $\bullet$ \\ 
 \hline
 $v_1=v_2=v_3>0$  &  &  &  &  & \\ 
 \hline
 $v_1=v_2>v_3=0$& $\bullet$ & $\bullet$ &  &  &  \\ 
 \hline
 $v_1=v_2>v_3>0$ &  &  &  &  & \\ 
  \hline
 $v_1>v_2=v_3=0$& $\bullet$ & $\bullet$ & $\bullet$ &  &  \\ 
 \hline 
 $v_1>v_2=v_3>0$ & $\bullet$ &  &  &  & \\ 
 \hline
 $v_1>v_2>v_3=0$& $\bullet$ &  &  &  &  \\
 \hline 
 $v_1>v_2>v_3>0$ &  &  &  &  & \\  
\hline
\end{tabular}
        \end{center}
        \caption{$\lambda$-compatibility for $m=3$.}
        \label{tab:lambda_compatibility}
    \end{table}
\end{example}

\begin{lemma}\label{lem:affine_subspace}
Let $\sigma\in S_n$ have cycle type $\lambda=(\ell_1,\dots,\ell_m)$. Let $F$ be a forest on $[m]$ whose connected components induce the partition $\pi=\{B_1,\dots,B_k\}$ of $[m]$.
Then $\aff(\hobox_F)$ intersects the lattice $\Z^n$ if and only if $\pi$ is $\lambda$-compatible.
\end{lemma}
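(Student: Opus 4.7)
My plan is to translate the geometric condition $\aff(\hobox_F) \cap \Z^n \neq \emptyset$ into a linear-algebra problem in $\R^m$, decouple it over the blocks of $\pi$, and then perform a short $2$-adic computation. By \eqref{eq:boxF}, the affine span $\aff(\hobox_F)$ meets $\Z^n$ iff there exist real coefficients $c_e$ (for $e \in E(F)$) with
\[
\sum_{k=1}^m \tfrac{\ell_k+1}{2}\,\be_{\sigma_k} \;+\; \sum_{\substack{e=\{i,j\}\in E(F)\\ i<j}} c_e\bigl(\ell_j \be_{\sigma_i} - \ell_i \be_{\sigma_j}\bigr) \;\in\; \Z^n,
\]
where I may ignore the integral translate $\bv_F$. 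Because the cycles of $\sigma$ partition $[n]$, the vectors $\be_{\sigma_1},\dots,\be_{\sigma_m}$ have pairwise disjoint $0/1$ supports, so $\sum_k \gamma_k \be_{\sigma_k} \in \Z^n$ iff each $\gamma_k \in \Z$. Setting $p_k = 0$ when $\ell_k$ is odd and $p_k = \tfrac12$ when $\ell_k$ is even, the condition reduces to whether $p \in A\R^{E(F)} + \Z^m$, where $A$ is the $m \times E(F)$ matrix whose column for $e=\{i,j\}$ has $\ell_j$ in row $i$, $-\ell_i$ in row $j$, and zeros elsewhere.

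Then I would decouple over $\pi$. Each column of $A$ is supported in the block $B \in \pi$ containing its endpoints, so the condition splits as $p|_B \in H_B + \Z^B$ for every $B \in \pi$, where $H_B \subseteq \R^B$ is the real span of the columns of $A$ indexed by edges of $F$ inside $B$. By \Cref{lem:lin_indep_sets} those edges form a tree on $B$, giving $|B|-1$ linearly independent columns, each of which is annihilated by the linear form $g_B(x) := \sum_{k \in B} \ell_k x_k$. Dimension-counting forces $H_B = \ker(g_B)$ (with $H_B = \{0\}$ when $|B|=1$), so
\[
H_B + \Z^B \;=\; g_B^{-1}\bigl(g_B(\Z^B)\bigr) \;=\; g_B^{-1}(\gamma_B \Z), \qquad \gamma_B := \gcd\{\ell_k : k \in B\},
\]
and the per-block condition becomes $g_B(p|_B) = \tfrac{1}{2}\sum_{k \in B,\,\ell_k \text{ even}} \ell_k \in \gamma_B \Z$.

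Finally, a short $2$-adic analysis turns this into $\lambda$-compatibility. Write $v_k := \val(\ell_k)$ and $v_{\min} := \min_{k \in B} v_k$, so $\val(\gamma_B) = v_{\min}$ and $\val(\ell_k/\gamma_B) = v_k - v_{\min}$. Dividing the displayed condition by $\gamma_B$, we need $\sum_{k \in B,\,\ell_k \text{ even}} (\ell_k/\gamma_B) \equiv 0 \pmod 2$, and each summand is odd exactly when $v_k = v_{\min}$. If $v_{\min} = 0$ (some $\ell_k$ is odd), every summand has $v_k \geq 1 > v_{\min}$ and is thus even, so the congruence holds automatically---matching condition (i) of \Cref{def:compatibility}. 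Otherwise all $\ell_k$ are even and the condition reduces to $|\{k \in B : v_k = v_{\min}\}|$ being even---matching condition (ii). Assembling these per-block conditions yields the lemma.

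The main obstacle I anticipate is identifying the correct annihilator $g_B$: a naive dualization of the direction vectors suggests the weights $(1/\ell_k)$, whereas in fact the annihilating form is $(\ell_k)$. This reversal is what makes $\gcd(\ell_k : k \in B)$ and the $2$-adic valuations of the $\ell_k$ themselves---rather than of their reciprocals---govern the compatibility criterion; once this is sighted, the rest is bookkeeping.
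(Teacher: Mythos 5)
Your proposal is correct and takes essentially the same route as the paper: both identify $\aff(\hobox_F)$ blockwise via the annihilating form $x\mapsto\sum_{j\in B}\ell_j x_j$, reduce the lattice-intersection question to a $\gcd$-divisibility condition by B\'ezout, and conclude with the same $2$-valuation parity analysis that yields $\lambda$-compatibility. Your bookkeeping modulo $\Z^m$ with $p_k\in\{0,\tfrac12\}$ is just a cosmetic variant of the paper's comparison of $\val(\gcd(\ell_j:j\in B))$ with $\val\bigl(\sum_{j\in B}\ell_j(\ell_j+1)\bigr)$.
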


\begin{proof}
First we claim that 
\begin{equation} \label{eq:lambda comp}
\aff(\hobox_F) = \left\{\sum_{j=1}^m x_j \be_{\sigma_j} \, : \, 
                \sum_{j\in B_i}\ell_jx_j =
                \sum_{j\in B_i} \frac{\ell_j(\ell_j+1)}{2} \textrm{ for } 1\le i\le k \right\}.
\end{equation}
Let $E(F)$ be the edge set of $F$. We have
$
\aff(\hobox_F) =     \Span\{\ell_b\be_{\sigma_a}-\ell_a\be_{\sigma_b}:\{a,b\}\in E(F)\} + \sum_{a=1}^m\frac12({\ell_a+1})\be_{\sigma_a}.
$
A point $\by\in\Span\{\ell_b\be_{\sigma_a}-\ell_a\be_{\sigma_b}:\{a,b\}\in E(F)\}$ will satisfy $\sum_{j\in B_i}\ell_j y_j=0$ for each block $B_i$.
Furthermore, the translating vector $\bv:= \sum_{a=1}^m\frac12(\ell_a+1)\be_{\sigma_a}$ satisfies $\sum_{j\in B_i}\ell_j v_j = \sum_{j\in B_i}\frac12{\ell_j(\ell_j+1)}$ for each block $B_i$.
Thus every point $\bx$ in the affine span of $\hobox_F$ satisfies the given equations. 
These are all the relations among the $x_j$s because each block $B_i$ contributes $|E(B_i)|=|B_i|-1$ to the dimension of the affine span of $\hobox_F$.

This affine subspace intersects the lattice $\Z^n$ if and only if all equations in \eqref{eq:lambda comp} have integer solutions.
Elementary number theory tells us that this is the case if and only if each block $B_i$ satisfies
\begin{equation}\label{eq:div}
\gcd(\ell_j:j\in B_i) \;\Bigg|\; \sum_{j\in B_i} \frac{\ell_j(\ell_j+1)}{2}.
\end{equation}
It is always true that 
    $\gcd(\ell_j:j\in B_i)$ divides $\displaystyle \sum_{j\in B_i} \ell_j(\ell_j+1)$,
so \eqref{eq:div} holds if and only if
	\begin{equation}\label{eq:valuations}
	\val\Big(\gcd(\ell_j:j\in B_i)\Big) < \val\Big(\sum_{j\in B_i} \ell_j(\ell_j+1)\Big).
	\end{equation}
We consider two cases.

\medskip
\noindent
\emph{(i)}
Suppose $\ell_j$ is odd for some $j\in B_i$. Then $\gcd(\ell_j:j\in B_i)$ is odd, whereas $\sum_{j\in B_i}\ell_j(\ell_j+1)$ is always even.
Hence \eqref{eq:valuations} always holds in this case.

\medskip
\noindent
\emph{(ii)} 
Suppose that $\ell_j$ is even for all $j\in B_i$.
For each $\ell_j$, write $\ell_j = 2^{p_j}q_j$ for some integer $p_j\ge 1$ and odd integer $q_j$.
Then $\val(\gcd(\ell_j:j\in B_i)) = \min_{j\in B_i}p_j$; we will call this integer $p$.
We have
    \begin{align*}
        \val\Big(\sum_{j\in B_i}\ell_j(\ell_j+1)\Big)
        &= \val\Big(\sum_{j\in B_i}2^{p_j}q_j(\ell_j+1)\Big)\\
        &= p + \val\Big( \sum_{j\in B_i} 2^{p_j-p}q_j(\ell_j+1) \Big).
    \end{align*}
Note that $q_j(\ell_j+1)$ is odd for each $j$.
If the minimum $2$-valuation $p$ of $\{\ell_j \,: \, j\in B_i\}$ occurs an odd number of times, then $\sum_{j\in B_i}2^{p_j-p}q_j(\ell_j+1)$ will be odd and we will have $\val(\sum_{j\in B_i}\ell_j(\ell_j+1)) = p$.
Otherwise, this sum will be even and we will have $\val(\sum_{j\in B_i}\ell_j(\ell_j+1)) > p$.
Therefore \eqref{eq:valuations} holds if and only if the minimum $2$-valuation among the $\ell_j$ for $j\in B_i$ occurs an even number of times.
This is precisely the condition of $\lambda$-compatibility.
\end{proof}

We now have all of the necessary tools to compute the Ehrhart quasipolynomial of the fixed polytope $\Pi_n^\sigma$. Recall the definition of $\lambda$-compatibility in \Cref{def:compatibility} and the definition of $v_\pi$ in \eqref{eq:v_pi}.

\begin{reptheorem}{thm:main theorem}
Let $\sigma$ be a permutation of $[n]$ with cycle type $\lambda = (\ell_1, \ldots, \ell_m)$. 
Then the Ehrhart quasipolynomial of the fixed polytope of the permutahedron $\Pi_n$ fixed by $\sigma$ is
 \begin{equation*}
        L_{\Pi_n^\sigma}(t) = 
            \begin{cases}
                \displaystyle \quad \,\, \sum_{\pi \vDash[m]}v_\pi \cdot t^{m-|\pi|} &\text{if $t$ is even}
                \\
                \displaystyle\sum_{\substack{\pi \vDash[m]\\\lambda-\text{compatible}}} \hspace{-.5cm} v_\pi \cdot t^{m-|\pi|} &\text{if $t$ is odd}
            \end{cases}.
    \end{equation*}
\end{reptheorem}

\begin{proof}
We calculate the number of lattice points in each integer dilate $t\Pi_n^\sigma$ by decomposing it into half-open parallelotopes and adding up the number of lattice points inside of each parallelotope.

First, suppose that $t$ is even.
Then $t\Pi_n^\sigma$ is a lattice polytope, all parallelotopes in the decomposition of $t\Pi_n^\sigma$ have vertices on the integer lattice, and each $i$-dimensional parallelotope $\hobox$ contains $\vol(\hobox)t^i$ lattice points \cite[Lemma 9.2]{BeckRobins}.
The parallelotopes correspond to linearly independent subsets of the vector configuration $\{\ell_i\be_{\sigma_j}-\ell_j\be_{\sigma_i}:1\le i<j\le m\}$, which is in bijection with forests on $[m]$. Following the reasoning used to prove \Cref{thm:ehrhart_poly_lattice}, we conclude that when $t$ is even,
    \[L_{\Pi_n^\sigma}(t) = \sum_{\pi\vDash[m]}v_\pi \cdot t^{m-|\pi|}.\]

Next, suppose $t$ is odd.
Then $t\Pi_n^\sigma$ is half-integral, but it may not be a lattice polytope.
As before, we may decompose $t\Pi_n^\sigma$ into half-open parallelotopes that are in bijection with forests on $[m]$.
\Cref{lem:num_lattice_points}, \Cref{lem:affine_subspace}, and \cite[Lemma 9.2]{BeckRobins} tell us that $\hobox_F$ contains $\vol(\hobox_F)t^{m-|\pi|}$ lattice points if the set partition $\pi$ induced by $F$ is $\lambda$-compatible, and $0$ otherwise. Therefore if $t$ is odd
    $$
    L_{\Pi_n^\sigma}(t) = \sum_{\substack{\pi\vDash[m]\\\lambda-\text{compatible}}} v_\pi\cdot t^{m-|\pi|}
    $$
as desired.
\end{proof}

\section{The equivariant \texorpdfstring{$\phi$}{H*}-series of the permutahedron}
\label{sec:phi}

We now compute the equivariant $\phi$-series of the permutahedron and characterize when it is polynomial and when it is effective, proving Stapledon's Effectiveness \Cref{conj:Stapledon} in this special case.

The \emph{Ehrhart series} of a rational polytope $P$ is
    $$
    \Ehr_P(z) = 1 + \sum_{t=1}^\infty L_P(t)\cdot z^t.
    $$
In computing the Ehrhart series of $\Pi_n^\sigma$, Eulerian polynomials naturally arise.
The \emph{Eulerian polynomial} $A_k(z)$ is defined by the identity 
$$    \sum_{t\geq 0}t^kz^t = \dfrac{A_k(z)}{(1-z)^{k+1}}.
$$

\begin{proposition}\label{prop:Ehrhart_series}
Let $\sigma\in S_n$ have cycle type $\lambda=(\ell_1,\dots,\ell_m)$. 
The Ehrhart series of $\Pi_n^\sigma$ is
    \begin{equation*}
        \Ehr_{\Pi_n^\sigma}(z) = \sum_{\substack{\pi\vDash[m]\\\lambda\text{-compatible}}}\frac{v_\pi\cdot A_{m-|\pi|}(z)}{(1-z)^{m-|\pi|+1}}
        +
        \sum_{\substack{\pi\vDash[m] \\ \lambda\text{-incompatible}}} \frac{v_\pi\cdot 2^{m-|\pi|}\cdot A_{m-|\pi|}(z^2)}{(1-z^2)^{m-|\pi|+1}}
    \end{equation*}
and the $\phi$-series of the permutahedron equals
    \begin{equation*}
        \phi[z](\sigma) = \Big(\prod_{i=1}^m(1-z^{\ell_i})\Big)\cdot\Ehr_{\Pi_n^\sigma}(z)
    \end{equation*}
\end{proposition}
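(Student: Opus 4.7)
The proof naturally splits into two parts, one for each identity.

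For the Ehrhart series, the plan is to start directly from \Cref{thm:main theorem} and reorganize the double sum over $t$ and $\pi$ by grouping by $\pi$. The key observation is that for each set partition $\pi \vDash [m]$, the monomial $v_\pi \cdot t^{m-|\pi|}$ contributes to $L_{\Pi_n^\sigma}(t)$ for every even $t$, and additionally for odd $t$ precisely when $\pi$ is $\lambda$-compatible. Equivalently, each $\lambda$-compatible $\pi$ contributes $v_\pi \sum_{t\ge 0} t^{m-|\pi|} z^t$ to $\Ehr_{\Pi_n^\sigma}(z)$, while each $\lambda$-incompatible $\pi$ contributes only the even-indexed part $v_\pi \sum_{s\ge 0}(2s)^{m-|\pi|} z^{2s}$. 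The first sum is the Eulerian generating function and equals $A_{m-|\pi|}(z)/(1-z)^{m-|\pi|+1}$. The second, after factoring out $2^{m-|\pi|}$ from the power of $2s$ and substituting $w = z^2$, equals $2^{m-|\pi|} A_{m-|\pi|}(z^2)/(1-z^2)^{m-|\pi|+1}$. Adding the two groups of contributions yields the claimed formula.

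For the $\phi$-series, the plan is to apply the defining equation \eqref{eq:phi_equation} to $g = \sigma$ and $P = \Pi_n$, recognizing that $\sum_{t\ge 0}\chi_{tP}(\sigma)z^t = \Ehr_{\Pi_n^\sigma}(z)$ by the identity $\chi_{tP}(g) = L_{P^g}(t)$. Thus it suffices to show that the denominator $(1-z)\det(I - \sigma \cdot z)$ appearing in \eqref{eq:phi_equation} equals $\prod_{i=1}^m (1-z^{\ell_i})$. Here the determinant is taken with respect to the sublattice $M$ obtained by translating the affine span of $\Pi_n$ (the hyperplane $\sum x_j = \binom{n+1}{2}$) to the origin. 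I would observe that on the ambient space $\R^n$ the permutation matrix of $\sigma$ has characteristic polynomial $\prod_{i=1}^m(x^{\ell_i}-1)$ since each $\ell_i$-cycle contributes eigenvalues equal to the $\ell_i$-th roots of unity. The decomposition $\R^n = \R(1,\dots,1) \oplus M_\R$ is $\sigma$-invariant, and $\sigma$ acts as the identity on the line $\R(1,\ldots,1)$, peeling off a factor of $(x-1)$. Hence on $M$ we have $\det(I - \sigma \cdot z) = \prod_i(1-z^{\ell_i})/(1-z)$, and multiplying by the leading $(1-z)$ gives $\prod_i(1-z^{\ell_i})$ as desired.

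The heart of the argument is really just bookkeeping, but two small points warrant care. First, in the even-$t$-only generating function, one needs the substitution $t = 2s$ to produce both a factor of $2^{m-|\pi|}$ and a substitution $z \to z^2$, and it is important that Eulerian polynomials already handle $t=0$ correctly (with $0^0 = 1$) so the constant term $L_{\Pi_n^\sigma}(0)=1$ is recovered automatically from the singleton partition contribution. Second, in the denominator computation, one must use the action on $M$ rather than on $\R^n$: the cancellation of the fixed $(1,\ldots,1)$ direction against the overall factor of $(1-z)$ is exactly what produces the clean $\prod_i(1-z^{\ell_i})$ and matches the cycle-index flavor of the answer.
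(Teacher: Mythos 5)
Your proposal is correct and follows essentially the same route as the paper: regroup the quasipolynomial from \Cref{thm:main theorem} by set partitions (full Eulerian sum for $\lambda$-compatible $\pi$, even-$t$ sum with the $2^{m-|\pi|}$ factor and $z\mapsto z^2$ for $\lambda$-incompatible $\pi$), then read off the second identity from \eqref{eq:phi_equation} by identifying $(1-z)\det(I-\rho(\sigma)z)$ with $\prod_i(1-z^{\ell_i})$. Your explicit justification via the $\sigma$-invariant splitting $\R^n=\R(1,\dots,1)\oplus M_\R$ simply spells out the paper's one-line determinant computation in slightly more detail; no changes needed.
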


\begin{proof}
The first statement follows readily from \Cref{thm:main theorem}:
    \begin{align*}
     \Ehr_{\Pi_n^\sigma}(z) 
     &= \sum_{t \text{ even }}\left(\sum_{\pi\vDash [m]} v_\pi t^{m-|\pi|}\right)z^t + \sum_{t \text{ odd }}\left(\sum_{\substack{\pi\vDash[m]\\\lambda\text{-compatible}}} v_\pi t^{m-|\pi|}\right)z^t\\
     &= \sum_{\substack{\pi\vDash[m]\\\lambda\text{-compatible}}} v_\pi \left(\sum_{t=0}^\infty t^{m-|\pi|}z^t\right)+\sum_{\substack{\pi\vDash[m]\\ \lambda\text{-incompatible}}}v_\pi\left(\sum_{t \text{ even }}t^{m-|\pi|}z^t\right)\\
     &=\sum_{\substack{\pi\vDash[m]\\ \lambda\text{-compatible}}} v_\pi \frac{A_{m-|\pi|}(z)}{(1-z)^{m-|\pi|+1}} +\sum_{\substack{\pi\vDash[m] \\ \lambda\text{-incompatible}}} v_\pi\cdot 2^{m-|\pi|} \frac{A_{m-|\pi|}(z^2)}{(1-z^2)^{m-|\pi|+1}}
    \end{align*}
For the second statement, recall that $\phi[z]$ is defined as in \eqref{eq:phi_equation}, where $\rho$ is the standard representation of $S_n$ in this case. The left hand side is the Ehrhart series. The denominator on the right side is $(1-z)\det(I-\rho(\sigma)\cdot z)$; it equals the characteristic polynomial of the permutation matrix of $\sigma$, which is $\prod_{i=1}^m (1-z^{\ell_i})$. 
\end{proof}

Tables \ref{tab:n=3} and \ref{tab:n=4} show the equivariant $\phi$-series of the permutahedra $\Pi_3$ and $\Pi_4$.

    \begin{table}[ht]
    \centering
    \renewcommand*{\arraystretch}{2.5}
\begin{tabular}{|c|c|c|c|}
    \hline
        Cycle type of $\sigma\in S_3$ & $\chi_{t\Pi_3}(\sigma)$ & $\sum\limits_{t \geq 0} \chi_{t\Pi_3}(\sigma)z^t$ & $\phi[z](\sigma)$\\
   \hline
        $(1,1,1)$ & $3t^2 + 3t + 1$ & $\dfrac{1+4z+z^2}{(1-z)^3}$ & $1+4z+z^2$\\
    \hline
        $(2,1)$ & $\begin{cases}t+1 & \text{if $t$ is even}\\t & \text{if $t$ is odd} \end{cases}$ & $\dfrac{1+z^2}{(1-z)(1-z^2)}$ & $1+z^2$\\ 
    \hline
        $(3)$ & 1 & $\dfrac{1}{1-z} = \dfrac{1+z+z^2}{1-z^3}$ & $1+z+z^2$\\ 
    \hline
\end{tabular}
    \caption{The equivariant $\phi$-series of $\Pi_3$}
    \label{tab:n=3}
    \end{table}

    \begin{table}[ht]
    \resizebox{\textwidth}{!}{
    \renewcommand*{\arraystretch}{2.5}
\begin{tabular}{|c|c|c|c|}
    \hline
        Cycle type of $\sigma\in S_4$ & $\chi_{t\Pi_4}(\sigma)$ & $\sum\limits_{t \geq 0} \chi_{t\Pi_4}(\sigma)z^t$ & $\phi[z](\sigma)$ \\
    \hline
        $(1,1,1,1)$ & $16t^3 + 15t^2 + 6t +1$ & $\dfrac{1 + 34z + 55z^2 + 6z^3}{(1-z)^4}$ & $1 + 34z + 55z^2 + 6z^3$\\
   \hline
        $(2,1,1)$ & $\begin{cases} 4t^2 + 3t + 1 & \text{if $t$ is even}\\4t^2 + 2t & \text{if $t$ is odd} \end{cases}$ & $\dfrac{1 + 6z + 20z^2 + 24z^3 + 11z^4 + 2z^5}{(1-z)^2(1-z^2){\color{red}(1+z)^2}}$ & $1+4z+11z^2-2z^3+\displaystyle\sum_{i=4}^\infty 4(-1)^i z^i$ \\
    \hline
        $(3,1)$ &  $t+1$ & $\dfrac{1}{(1-z)^2} = \dfrac{1+z+z^2}{(1-z)(1-z^3)}$ & $1+z+z^2$\\
    \hline
        $(4)$ & $\begin{cases} 1 & \text{if $t$ is even}\\0 & \text{if $t$ is odd} \end{cases}$ & $\dfrac{1}{1-z^2}=\dfrac{1+z^2}{1-z^4}$ & $1+z^2$\\
    \hline
        $(2,2)$ &  $\begin{cases} 2t+1 & \text{if $t$ is even}\\ 2t & \text{if $t$ is odd} \end{cases}$ & $\dfrac{1+2z+3z^2+2z^3}{(1-z^2)^2}$ & $1+2z+3z^2+2z^3$\\
    \hline
\end{tabular}
    }
    \caption{The equivariant $\phi$-series of $\Pi_4$}
    \label{tab:n=4}
    \end{table}
    
Stapledon writes, \emph{``The main open problem is to characterize when $\phi[z]$ is effective"}, and he conjectures the following characterization:

\begin{repconjecture}{conj:Stapledon}[{\cite[Effectiveness Conjecture~12.1]{Stapledon}}]
Let $P$ be a lattice polytope invariant under the action of a group $G$.
The following conditions are equivalent.
	\begin{enumerate}[(i)]
	\item\label{item:toric} The toric variety of $P$ admits a $G$-invariant non-degenerate hypersurface.
	\item\label{item:effective} The equivariant $\phi$-series of $P$ is effective.
	\item\label{item:poly} The equivariant $\phi$-series of $P$ is a polynomial.
	\end{enumerate}
\end{repconjecture}

He shows that (\ref{item:toric}) $\implies$ (\ref{item:effective}) $\implies$ (\ref{item:poly}), so only the reverse implications are conjectured.
Our next goal is to verify Stapledon's conjecture for the action of $S_n$ on the permutahedron $\Pi_n$.
We do so by showing that the conditions of \Cref{conj:Stapledon} hold if and only if $n\le3$.

\subsection{Polynomiality of \texorpdfstring{$\phi[z]$}{H*[z]}}

\begin{lemma}\label{lem:phi[z](sigma) polynomial}
Let $\sigma\in S_n$ have cycle type $\lambda= (\ell_1,\dots,\ell_m)$.
The equivariant $\phi$-series evaluated at $\sigma$, $\phi[z](\sigma)$,  is a polynomial if and only if the number of even parts in $\lambda$ is $0$, $m-1$, or $m$.
\end{lemma}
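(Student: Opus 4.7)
The plan is to use the factorization $\phi[z](\sigma) = \prod_{i=1}^m(1-z^{\ell_i}) \cdot \Ehr_{\Pi_n^\sigma}(z)$ from \Cref{prop:Ehrhart_series} and determine when the poles of the Ehrhart series are fully canceled by the zeros of the polynomial $\prod_{i=1}^m(1-z^{\ell_i})$. Every denominator appearing in \Cref{prop:Ehrhart_series} is a power of $(1-z)$ or $(1-z^2)$, so the only possible poles of $\Ehr_{\Pi_n^\sigma}(z)$ lie at $z=\pm 1$. Moreover $\prod_{i=1}^m(1-z^{\ell_i})$ has a simple zero at $z=1$ for each of the $m$ cycles and a simple zero at $z=-1$ for each of the $e$ even cycles, so it vanishes to order exactly $m$ at $z=1$ and to order exactly $e$ at $z=-1$. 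Since every term in $\Ehr_{\Pi_n^\sigma}(z)$ has a pole at $z=1$ of order at most $m-|\pi|+1\le m$, the pole at $z=1$ is always cleared, and the question reduces to controlling the pole at $z=-1$.

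At $z=-1$ only the incompatible terms contribute, since each compatible term has denominator $(1-z)^{m-|\pi|+1}$, which is regular there. Writing $1-z^2=(1-z)(1+z)$ and using $A_k(1)=k!$, a short computation shows that the incompatible summand indexed by $\pi$ contributes a pole of order exactly $m-|\pi|+1$ at $z=-1$ with leading Laurent coefficient $v_\pi(m-|\pi|)!/2$. The main subtlety, and the step I expect to be the main obstacle, is to rule out cancellations among these leading coefficients when summed over all incompatible $\pi$ of the same minimal size. This will follow from the positivity $v_\pi>0$ for every set partition $\pi$: inspecting \eqref{eq:v_pi}, a singleton block contributes $\gcd(\ell_j)\cdot\ell_j^{-1}=1$ and a block of size $\ge 2$ contributes a positive integer. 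With positivity in hand, I conclude that $\Ehr_{\Pi_n^\sigma}$ has a pole at $z=-1$ of order exactly $d:=m-|\pi^*|+1$, where $|\pi^*|$ is the minimum size of a $\lambda$-incompatible set partition of $[m]$ (with the convention $d=0$ if no such partition exists). Thus $\phi[z](\sigma)$ is a polynomial if and only if $d\le e$.

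The final step is to determine $|\pi^*|$ in the four ranges of $e$. If $e=0$ then every block contains some $\ell_j$ odd, condition (i) of \Cref{def:compatibility} holds automatically, and no incompatible partition exists. If $e=m$, the trivial bound $|\pi^*|\ge 1$ already gives pole order at most $m=e$. If $e=m-1$, the lone odd part forces the single block $[m]$ to be compatible, so $|\pi^*|\ge 2$ and the pole order is at most $m-1=e$. Finally if $1\le e\le m-2$, there are at least two odd parts (so $[m]$ remains compatible), but for any index $j$ with $\ell_j$ even the partition $\{\{j\},[m]\setminus\{j\}\}$ is incompatible because the singleton $\{j\}$ is bad (its minimum $2$-valuation $\val(\ell_j)$ is attained exactly once); hence $|\pi^*|=2$ and the pole order $m-1$ strictly exceeds $e$. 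Assembling these four cases yields the lemma.
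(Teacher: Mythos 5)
Your proposal is correct and follows essentially the same route as the paper: factor $\phi[z](\sigma)$ via \Cref{prop:Ehrhart_series}, note the zero of order $m$ at $z=1$ always clears that pole, use $A_k(1)=k!$ together with $v_\pi>0$ to pin down the exact order of the pole at $z=-1$ as $m-|\pi^*|+1$ for the minimal incompatible partition $\pi^*$, and compare with the order $e$ of the zero of $\prod_i(1-z^{\ell_i})$ at $z=-1$. Your case analysis organized by $e$ (using the singleton-even-block partition to show $|\pi^*|=2$ when $1\le e\le m-2$) matches the paper's cases (a)--(c) in substance.
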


\begin{proof}
By \Cref{prop:Ehrhart_series}, the Ehrhart series $\Ehr_{\Pi_n^\sigma}(z)$  may only have poles at $z=\pm1$.
The pole at $z=1$ has order at most $m$.
Since the polynomial $\prod_{i=1}^m(1-z^{\ell_i})$ has a zero at $z=1$ of order $m$, the series $\phi[z](\sigma)$ will not have a pole at $z=1$.
Hence we only need to check whether $\phi[z](\sigma)$ has a pole at $z=-1$.

\medskip

\noindent \emph{(i)} First, suppose no $\ell_i$ is even.
Then all partitions of $[m]$ are $\lambda$-compatible, so $\Ehr_{\Pi_n^\sigma}(z)$ does not have a pole at $z=-1$.
Thus $\phi[z](\sigma)$ is a polynomial in this case.

\medskip

\noindent \emph{(ii)}
Next, suppose that some $\ell_i$ is even.
Then the partition $\{\{\ell_i\},[m]-\{\ell_i\}\}$ is $\lambda$-incompatible, so $\Ehr_{\Pi_n^\sigma}(z)$ does have a pole at $z=-1$.
It is well known that $A_{k}(1) = k!$ so 
every numerator  $v_\pi \cdot 2^{m - |\pi|} \cdot A_{m-|\pi|}(z^2)$ is positive at $z=-1$.
It follows that the order of the pole $z=-1$ of $\Ehr_{\Pi_n^\sigma}(z)$ is $m-d+1$ where $d=\min\{|\pi|:\pi\text{ is }\lambda\text{-incompatible}\}$.
This equals $m-1$ if the partition $\{[m]\}$ is $\lambda$-compatible and $m$ if it is $\lambda$-incompatible.

On the other hand, $\prod_{i=1}^m(1-z^{\ell_i})$ has a zero at $z=-1$ of order equal to the number of even $\ell_i$. Now consider three cases:

\smallskip
\noindent
\emph{a)} If the number of even $\ell_i$ is between $1$ and $m-2$, 
it is less than the the order of the pole of $\Ehr_{\Pi_n^\sigma}(z)$, so $\phi[z](\sigma)$ is not a polynomial.

\smallskip
\noindent
\emph{b)} If all $\ell_i$ are even, the zero at $z=-1$ in $\prod_{i=1}^m(1-z^{\ell_i})$ 
has order $m$ and cancels the pole in $\Ehr_{\Pi_n^\sigma}(z)$. Thus $\phi[z](\sigma)$ is a polynomial.

\smallskip
\noindent
\emph{c)} If $m-1$ of the $\ell_i$ are even, the partition $\{[m]\}$ is $\lambda$-compatible. Therefore the order of the pole in $\Ehr_{\Pi_n^\sigma}(z)$ and the order of the zero in $\prod_{i=1}^m(1-z^{\ell_i})$ both equal $m-1$, and $\phi[z](\sigma)$ is a polynomial.
\end{proof}

\begin{proposition} \label{prop:poly}
The equivariant $\phi$-series of  the permutahedron $\Pi_n$ is a polynomial if and only if $n\le 3$.
\end{proposition}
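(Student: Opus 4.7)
The plan is to apply \Cref{lem:phi[z](sigma) polynomial} conjugacy class by conjugacy class. Since $\phi[z]$ takes values in the ring of virtual characters $R(S_n)$, it is a polynomial in $z$ if and only if the scalar series $\phi[z](\sigma)$ is a polynomial in $z$ for every $\sigma \in S_n$. By \Cref{lem:phi[z](sigma) polynomial}, this holds if and only if for every partition $\lambda = (\ell_1,\dots,\ell_m)$ of $n$, the number of even parts of $\lambda$ belongs to $\{0,\, m-1,\, m\}$.

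I would then verify this condition by a direct case analysis on $n$. For $n=1,2,3$ there are only a handful of partitions and each one is straightforward to check: every cycle type of $S_1$, $S_2$, $S_3$ has either no even parts, all even parts, or exactly one even part in a partition of length at most two, so the condition of \Cref{lem:phi[z](sigma) polynomial} is satisfied in every case. For $n \ge 4$, I would exhibit a single bad cycle type: the partition $\lambda = (2,1,1,\dots,1)$ of $n$ has $m = n-1$ parts and exactly one even part. Since $n \ge 4$ forces $m-1 = n-2 \ge 2$, the number of even parts $1$ does not lie in $\{0, m-1, m\}$, and so $\phi[z](\sigma)$ fails to be a polynomial for any $\sigma \in S_n$ of this cycle type. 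Consequently $\phi[z]$ itself is not a polynomial in $R(S_n)[[z]]$.

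Combining the two directions yields the claim. The only step requiring any actual work is invoking \Cref{lem:phi[z](sigma) polynomial}; once that is available the argument reduces to a short verification on partitions, so I do not expect any genuine obstacle here. One minor point to be careful about is making sure that ``polynomiality of $\phi[z]$'' in $R(S_n)[[z]]$ is equivalent to polynomiality of each $\phi[z](\sigma)$, which follows because the characters $\{\chi : \sigma \mapsto \phi_i(\sigma)\}$ are determined by their values on conjugacy classes, so $\phi_i = 0$ in $R(S_n)$ if and only if $\phi_i(\sigma) = 0$ for all $\sigma$.
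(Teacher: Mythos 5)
Your proof is correct and follows essentially the same route as the paper: invoke \Cref{lem:phi[z](sigma) polynomial}, check all cycle types for $n\le 3$, and exhibit a violating partition for $n\ge 4$. The only (inconsequential) difference is your uniform witness $(2,1,\dots,1)$, where the paper uses $(n-2,1,1)$ for even $n$ and $(n-3,1,1,1)$ for odd $n$; your added remark that polynomiality of $\phi[z]$ in $R(S_n)[[z]]$ is equivalent to polynomiality of every evaluation $\phi[z](\sigma)$ is a correct justification of a step the paper leaves implicit.
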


\begin{proof}
When $n\le 3$, all partitions of $n$ have $0$, $1$, or all odd parts.
Hence $\phi[z](\sigma)$ is a polynomial for all $\sigma\in S_n$, so $\phi[z]$ is a polynomial.

Suppose $n\ge4$.
Then there always exists some partition of $n$ with more than $1$ but fewer than all odd parts: if $n$ is even we can take the partition $(n-2,1,1)$, and if $n$ is odd we can take the partition $(n-3,1,1,1)$.
Therefore $\phi[z]$ is not a polynomial.
\end{proof}

\subsection{Effectiveness of \texorpdfstring{$\phi[z]$}{H*[z]}}

\begin{proposition} \label{prop:eff}
The equivariant $\phi$-series of  the permutahedron $\Pi_n$ is effective if and only if $n\le 3$.
\end{proposition}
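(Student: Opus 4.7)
The plan is to combine \Cref{prop:poly} with Stapledon's already-established implication (\ref{item:effective}) $\Rightarrow$ (\ref{item:poly}) of \Cref{conj:Stapledon} for the ``only if'' direction, and to verify the ``if'' direction by a direct decomposition into irreducible characters of $S_n$ for each of $n = 1, 2, 3$.

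For the ``only if'' direction I would argue by contrapositive. For $n \ge 4$, \Cref{prop:poly} shows that $\phi[z]$ is not a polynomial, and Stapledon has already proved that an effective equivariant $\phi$-series must be polynomial. Therefore $\phi[z]$ cannot be effective. This direction costs nothing beyond what is already in the paper.

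For the ``if'' direction I would dispatch $n \in \{1,2\}$ quickly by noting via \Cref{prop:Ehrhart_series} that $\phi[z]$ reduces to the constant trivial character in each case: the fixed polytope of the nontrivial element of $S_2$ is the single half-lattice point $(3/2,3/2)$, contributing $\Ehr(z) = 1/(1-z^2)$, which cancels with the factor $1-z^2$. For $n = 3$ I would read off the class function values of each coefficient of $\phi[z]$ from \Cref{tab:n=3}: on the conjugacy classes $(e, (12), (123))$ these are $\phi_0 = (1,1,1)$, $\phi_1 = (4,0,1)$, $\phi_2 = (1,1,1)$. A three-by-three inner product calculation against the character table of $S_3$ (trivial, sign, standard) produces the decompositions
\[
\phi_0 = \chi_{\mathrm{triv}}, \qquad \phi_1 = \chi_{\mathrm{triv}} + \chi_{\mathrm{sgn}} + \chi_{\mathrm{st}}, \qquad \phi_2 = \chi_{\mathrm{triv}},
\]
all with non-negative integer multiplicities, certifying effectiveness.

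The main obstacle is essentially absent here: the hard direction is an immediate corollary of \Cref{prop:poly} together with Stapledon's known implication, and the small cases are a finite character-table check. All the real content went into establishing the non-polynomiality of $\phi[z]$ for $n \ge 4$ in \Cref{prop:poly}.
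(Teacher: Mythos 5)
Your proposal is correct and follows essentially the same route as the paper: the paper likewise reduces $n\ge 4$ to \Cref{prop:poly} via Stapledon's implication that effectiveness forces polynomiality, and then certifies $n=1,2,3$ by decomposing $\phi_0,\phi_1,\phi_2$ against the character table of $S_3$, obtaining exactly $\phi_0=\phi_2=\chi_{triv}$ and $\phi_1=\chi_{triv}+\chi_{alt}+\chi_{std}$. Your explicit Ehrhart-series check for $n=1,2$ is just a slightly more detailed version of the paper's brief remark that $\phi_{\Pi_1}[z]=\phi_{\Pi_2}[z]=\chi_{triv}$.
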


\begin{proof}
Stapledon \cite{Stapledon} observed that if $\phi$ is effective then it is a polynomial.
Thus by \Cref{prop:poly} we only need to check effectiveness for $n=1,2,3$. 

Let us check it for $n=3$. \Cref{tab:n=3} shows that $\phi[z]=\phi_0+\phi_1z+\phi_2z^2$ for $\phi_0, \phi_1, \phi_2 \in R(S_3)$. Comparing these with the character table of $S_3$ (see for example \cite[pg.14]{FultonHarris}) gives
\[
\phi_0=\chi_{triv},  \qquad \phi_1 = \chi_{triv}+\chi_{alt}+\chi_{std}, \qquad \phi_2 = \chi_{triv}.
\]
Since all coefficients are nonnegative, 
 $\phi_{\Pi_3}[z] = \chi_{triv} + (\chi_{triv}+\chi_{alt}+\chi_{std})z+\chi_{triv}z^2$
is indeed effective.

Similarly,  $\phi_{\Pi_2}[z] = \chi_{triv}$ and  $\phi_{\Pi_1}[z] = \chi_{triv}$ are effective as well.
\end{proof}

In contrast, a similar computation based on \Cref{tab:n=4} gives 
\begin{eqnarray*}
\phi_{\Pi_4} &=&
\chi_{triv} +
(3\chi_{triv} + \chi_{alt} + 5\chi_{std} + 3\chi_{\ysub{2,1,1}} + 3\chi_{\ysub{2,2}}) z + 
(6\chi_{triv} + 9\chi_{std} + 4\chi_{\ysub{2,1,1}} + 5\chi_{\ysub{2,2}})z^2 \\
&+& (\chi_{alt} + \chi_{\ysub{2,1,1}} + \chi_{\ysub{2,2}})z^3
+ (\chi_{triv}-\chi_{alt}+\chi_{std}-\chi_{\ysub{2,1,1}}) (z^4-z^5+z^6-z^7+\cdots)
\end{eqnarray*}
which is not effective.

\subsection{\texorpdfstring{$S_n$}{S\_n}-invariant non-degenerate hypersurfaces in the permutahedral variety}

We begin by explaining condition (\ref{item:toric}) of \Cref{conj:Stapledon}, which arises from Khovanskii's notion of non-degeneracy \cite{Khovanskii}. We refer the reader to \cite[Section 7]{Stapledon} for more details.

Let $P \subset \R^n$ be a lattice polytope with an action of a finite group $G$.
For $\bv\in \Z^n$ we write $x^\bv:=x_1^{v_1}\cdot\ldots\cdot x_n^{v_n}$.
The coordinate ring of the projective toric variety $X_P$  of $P$ has the form $\C[x^\bv:\bv\in P\cap \Z^n]$, so a hypersurface in $X_P$ is given by a linear equation $\sum_{\bv\in P\cap \Z^n}a_\bv x^\bv=0$ for some complex coefficients $a_\bv$.
The group $G$ acts on the monomials $x^\bv$ by its action on the lattice points $\bv\in P\cap \Z^n$, so the equation of a $G$-invariant hypersurface should have $a_\bv = a_\bu$ whenever $\bu$ and $\bv$ are in the same $G$-orbit.
A projective hypersurface in $X_P$ with equation $f(x_1,\dots,x_n)=0$ is \emph{smooth} if the gradient $(\partial f/\partial x_1,\dots,\partial f/\partial x_n)$ is never zero when $(x_1,\dots,x_n)\in(\C^*)^n$.
There is a unique polynomial in the $a_\bv$'s, called the \emph{discriminant}, such that  the hypersurface is smooth when the discriminant does not vanish at the coefficients $a_\bv$. A hypersurface in the toric variety of $P$ is \emph{non-degenerate} if it is smooth and for each face $F$ of $P$, the hypersurface $\sum_{\bv\in F\cap \Z^n}a_\bv x^\bv=0$ is also smooth.

The \emph{permutahedral variety} $X_{\Pi_n}$ is the projective toric variety associated to the permutahedron $\Pi_n$.

\begin{proposition}\label{prop:smooth}
The permutahedral variety $X_{\Pi_n}$ admits an $S_n$-invariant non-degenerate hypersurface if and only if $n \leq 3$.
\end{proposition}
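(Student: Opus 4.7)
The plan is to split the biconditional and handle each direction separately.

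The ``only if'' direction is immediate from Stapledon's implication $(i)\Rightarrow(iii)$ of \Cref{conj:Stapledon}, already recorded in the excerpt: if $X_{\Pi_n}$ admits an $S_n$-invariant non-degenerate hypersurface, then the equivariant $\phi$-series of $\Pi_n$ is polynomial, and by \Cref{prop:poly} this forces $n\le 3$.

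For the ``if'' direction, I would construct explicit $S_n$-invariant non-degenerate hypersurfaces for each $n\le 3$. The cases $n=1,2$ are essentially trivial: $X_{\Pi_1}$ is a point, while $X_{\Pi_2}\cong \mathbb{P}^1$ and the $S_2$-invariant equation $x^{(1,2)}+x^{(2,1)}=0$ cuts out a smooth invariant point. The substantive case is $n=3$. The hexagon $\Pi_3$ contains seven lattice points in two $S_3$-orbits: the six vertices (permutations of $(1,2,3)$) and the centroid $(2,2,2)$. Hence every $S_3$-invariant hypersurface is of the form
\[
f_{a,b}(x) = a \sum_{\sigma\in S_3} x_1^{\sigma(1)} x_2^{\sigma(2)} x_3^{\sigma(3)} + b\,(x_1 x_2 x_3)^2 = 0
\]
for some $(a:b)\in \mathbb{P}^1$ with $a\ne 0$ (so that the Newton polytope of $f_{a,b}$ is all of $\Pi_3$).

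I would verify non-degeneracy for generic $(a,b)$ by checking, face by face, that the face truncation has no critical points in the torus --- equivalently, that it defines a smooth subvariety of the corresponding torus orbit. On each vertex face the truncation is a single monomial, and on each edge face it is a binomial $a(x^{\bv_1}+x^{\bv_2})$; neither has torus critical points for $a\ne 0$. The only subtle case is the top-dimensional face $F=\Pi_3$ itself. Here the Euler relation $\sum_i x_i\partial_i f_{a,b} = 6 f_{a,b}$ (valid because $\Pi_3$ lies on the hyperplane $\sum x_i = 6$) reduces the critical-point system to three independent equations in three unknowns. Using $S_3$-symmetry, any hypothetical critical orbit contains a representative of one of three shapes --- $(t,t,t)$, $(s,s,t)$ with $s\ne t$, or $(r,s,t)$ with all three distinct --- and in each shape, substitution and elimination produce a finite list of exceptional values of $(a:b)$ at which a critical point arises. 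For any $(a,b)$ outside this finite exceptional set, the hypersurface $f_{a,b}=0$ is non-degenerate.

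The main obstacle is the ``all distinct'' case, where the elimination is a priori the most delicate. However, the full $S_3$-symmetry of $f_{a,b}$ and the small ambient dimension keep the resulting resultant tractable; ultimately the exceptional locus is a proper Zariski-closed subset of the one-parameter family of invariant equations, which guarantees the existence of suitable $(a,b)$ and completes the construction.
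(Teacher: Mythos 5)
Your overall architecture matches the paper's: the ``only if'' direction via Stapledon's implication (the paper quotes his result that an invariant non-degenerate hypersurface forces effectiveness and then applies \Cref{prop:eff}; your route through polynomiality and \Cref{prop:poly} is equally valid), and the ``if'' direction by exhibiting explicit invariant hypersurfaces for $n\le 3$ and checking faces, with the vertex and edge truncations handled exactly as in the paper (a monomial, or a binomial whose two exponent vectors are non-parallel, has no critical points on the torus).

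The gap is in the top-dimensional face of the $n=3$ case. Everything hinges on showing that the discriminant does not vanish identically along the one-parameter invariant family $f_{a,b}$ --- equivalently, that your ``exceptional locus'' is a \emph{proper} closed subset of the parameter line. You assert this (``substitution and elimination produce a finite list of exceptional values,'' ``ultimately the exceptional locus is a proper Zariski-closed subset'') but never establish it: the elimination in the three shape cases $(t,t,t)$, $(s,s,t)$, $(r,s,t)$ is not carried out, and no specific smooth member of the family is exhibited. This is not a genericity statement that can be waved through, because the family is constrained by $S_3$-invariance, and genericity inside an invariant family is precisely what fails for $n\ge 4$ (there every invariant choice has a singular square-face truncation); so the properness of the bad locus for $n=3$ is the real content of this step. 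The paper closes it by rewriting the invariant equation in power sums and substituting into a known formula for the discriminant of a symmetric cubic, obtaining an explicit nonzero degree-$12$ polynomial in the parameter, so that any parameter avoiding its roots works. To complete your argument you would need the analogous verification: either actually perform the elimination in your three shape cases and check that the resulting conditions on $(a:b)$ are nontrivial, or verify smoothness directly for one explicit numerical value of $b/a$.
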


\begin{proof}
Stapledon proved \cite[Theorem 7.7]{Stapledonmirror} that if $X_{\Pi_n}$ admits such a hypersurface, then $\phi[z]$ is effective. By \Cref{prop:eff}, this can only occur for $n=1,2,3$.

\medskip
\noindent Case 1: $n=1$.

A hypersurface in the toric variety of $\Pi_1=\{1\}\subset\R$ has the form $ax=0$, and since we are working over projective space, we can assume $a=1$.
The derivative of this never vanishes, so this is a smooth $S_1$-invariant hypersurface.

\medskip
\noindent Case 2: $n=2$.

The permutahedron $\Pi_2$ is the line segment with vertices $(1,2), (2,1)\in\R^2$ and no other lattice points.
The vertices are in the same $S_2$-orbit, so we need to check that hypersurface with equation $xy^2+x^2y=0$ is non-degenerate.
The gradient is $(y(y+2x),x(2y+x))$, which never vanishes on $(\C^*)^2$.
The vertex $(1,2)$ corresponds to the hypersurface $xy^2=0$.
The gradient of this is $(y^2,2xy)$ which also never vanishes on $(\C^*)^2$.
The computation for the other vertex is similar.
Hence this is an $S_2$-invariant non-degenerate hypersurface.

\medskip
\noindent Case 3: $n=3$.

The permutahedron $\Pi_3$ is a hexagon with one interior point.
Choosing the vertices to be all permutations of the point $(0,1,2)\in\R^3$ (instead of $(1,2,3)$) will simplify calculations.
The six vertices of the hexagon are one $S_3$-orbit and the interior point is its own orbit.
Hence (up to scaling) an $S_3$-invariant hypersurface must have the equation
    \begin{equation}\label{eq:hypersurface}
        a\cdot xyz + yz^2+ y^2z+xy^2+x^2y+xz^2+x^2z=0
    \end{equation}
which has one parameter $a$.
We want to check whether there exists some choice of $a$ for which this hypersurface is non-degenerate. We need to check this on each face.

The vertex $(0,1,2)$ gives the hypersurface $yz^2=0$ with gradient $(0,z^2,2yz)$.
This never vanishes on $(\C^*)^3$, so it is smooth.
The computations for the other five vertices are similar.

For the edge connecting $(0,1,2)$ and $(0,2,1)$, the corresponding hypersurface is $yz^2 + y^2z = 0$. This is the same hypersurface as the line segment $\Pi_2$, so it is smooth; so are the hypersurfaces of the other five edges.

Finally, we need to show there exists $a$ such that the entire hypersurface is smooth. This is the same as showing that the discriminant of \eqref{eq:hypersurface} is not identically zero.
Since \eqref{eq:hypersurface} is a symmetric polynomial, we can write in terms of the power-sum symmetric polynomials, $p_k = x^k+y^k+z^k$; we obtain
\begin{equation}\label{eq:powersum}
        \frac{a}{6}p_1^3 + \left(1-\frac{a}{2}\right)p_1p_2 + \left(\frac{a}{3}-1\right)p_3=0.
\end{equation}
The discriminant of a degree $3$ symmetric polynomial is given in  \cite[Equation 64]{discriminant}; substituting the coefficients $a/6$, $1-a/2$, and $a/3-1$ gives a non-zero polynomial of degree $12$:
	\begin{eqnarray*}
	&\frac{-512000}{16677181699666569}a^{12} + \frac{492800}{617673396283947}a^{10} - \frac{985600}{617673396283947}a^9 + \frac{6320}{7625597484987}a^8 \\
	& -\frac{25280}{7625597484987}a^7 + \frac{27431}{7625597484987}a^6 - \frac{478}{282429536481}a^5 + \frac{965}{282429536481}a^4 \\
	& -\frac{2128}{847288609443}a^3 + \frac{8}{10460353203}a^2 - \frac{32}{31381059609}a + \frac{16}{31381059609}
	\end{eqnarray*}
Any value of $a$ that is not a root of this discriminant gives us an $S_3$-invariant non-degenerate hypersurface.
\end{proof}

By contrast, we should not be able to find an $S_n$-invariant non-degenerate hypersurface in $X_{\Pi_n}$ for $n \ge 4$. 
This can be seen from the fact that all permutahedra $\Pi_n$ when $n\ge 4$ have a square face, and the hypersurface of this square face is not smooth.
For example, consider the square face of $\Pi_4$ with vertices $(0,1,2,3)$, $(0,1,3,2)$, $(1,0,3,2)$, and $(1,0,2,3)$.
The corresponding hypersurface is $yz^2w^3 + yz^3w^2+xz^3w^2+xz^2w^3=0$, and its gradient vanishes whenever $x=-y$ and $z=-w$.

\subsection{Stapledon's Conjectures}

Our second main result now follows.

\begin{reptheorem}{thm:phi polynomial}
Stapledon's Effectiveness Conjecture holds for the permutahedron under the action of the symmetric group.
\end{reptheorem}

\begin{proof}
This follows immediately from Propositions \ref{prop:poly}, \ref{prop:eff}, and \ref{prop:smooth}.
\end{proof}

In closing, we verify the remaining three conjectures of Stapledon for the special case of the $S_n$--action on the permutahedron $\Pi_n$.

\begin{conjecture}\cite[Conjecture~12.2]{Stapledon}\label{conj:12.2}
If $\phi[z]$ is effective, then $\phi[1]$ is a permutation representation.
\end{conjecture}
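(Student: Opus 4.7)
The plan is to leverage Proposition \ref{prop:eff}, which establishes that the equivariant $\phi$-series of $\Pi_n$ under the $S_n$-action is effective only for $n \in \{1,2,3\}$. Thus \Cref{conj:12.2} reduces, in this special case, to checking that $\phi_{\Pi_n}[1]$ is a permutation character in these three small cases, where the proof of Proposition \ref{prop:eff} has already computed $\phi_{\Pi_n}[z]$ explicitly. Beyond the character table of $S_3$, no new structural result is needed.

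For $n=1$ and $n=2$ we have $\phi_{\Pi_n}[z] = \chi_{triv}$, so $\phi_{\Pi_n}[1] = \chi_{triv}$ is the permutation character of the one-point $S_n$-set. The only substantive case is $n=3$; evaluating the formula of Proposition \ref{prop:eff} at $z=1$ gives
\[
\phi_{\Pi_3}[1] \;=\; \chi_{triv} + (\chi_{triv} + \chi_{alt} + \chi_{std}) + \chi_{triv} \;=\; 3\chi_{triv} + \chi_{alt} + \chi_{std}.
\]

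To exhibit this class function as a permutation character I would regroup the summands as
\[
3\chi_{triv} + \chi_{alt} + \chi_{std} \;=\; \chi_{triv} \;+\; \bigl(\chi_{triv} + \chi_{alt}\bigr) \;+\; \bigl(\chi_{triv} + \chi_{std}\bigr),
\]
and match each summand against the character table of $S_3$. The character $\chi_{triv} + \chi_{alt}$ takes values $(2,0,2)$ on the conjugacy classes of the identity, a transposition, and a $3$-cycle, which coincides with the permutation character of the natural $S_3$-action on the two cosets of $A_3$; and $\chi_{triv} + \chi_{std}$ takes values $(3,1,0)$, matching the defining action of $S_3$ on $\{1,2,3\} \cong S_3/\langle(12)\rangle$. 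Hence $\phi_{\Pi_3}[1]$ is the permutation character of the $6$-element $S_3$-set $\mathrm{pt} \,\sqcup\, S_3/A_3 \,\sqcup\, \{1,2,3\}$, which verifies \Cref{conj:12.2} for the permutahedron.

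The only genuine computation is the character-table check in the $n=3$ case, which is short. The potential obstacle — deciding which nonnegative integer combinations of irreducible characters of $S_n$ are permutation characters as $n$ grows — is entirely sidestepped by Proposition \ref{prop:eff}, which confines us to $n \le 3$, where the decomposition above can be written down by hand.
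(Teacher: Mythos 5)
Your proposal is correct and follows essentially the same route as the paper: the paper's proof of \Cref{conj:12.2} in \Cref{prop:conjectures hold} likewise restricts to $n\le 3$ via \Cref{prop:eff}, and for $\Pi_3$ uses the identical regrouping $\phi[1]=\chi_{triv}+(\chi_{triv}+\chi_{alt})+(\chi_{triv}+\chi_{std})$, identifying the second summand as the permutation character of the sign action on a two-element set and the third as that of the defining action on $[3]$. Your explicit character-value check is a minor elaboration of the same argument.
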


\begin{conjecture}\cite[Conjecture~12.3]{Stapledon}\label{conj:12.3}
For a polytope $P\subset\R^n$, let $\mathrm{ind}(P)$ be the smallest positive integer $k$ such that the affine span of $kP$ contains a lattice point.
For any $g\in G$, let $M^g$ be the sublattice of $M$ fixed by $g$, and define $\det(I-\rho(g))_{(M^g)^\perp}$ to be the determinant of $I-\rho(g)$ when the action of $\rho(g)$ is restricted to $(M^g)^\perp$.
The quantity 
	\[\phi[1](g) = \frac{\dim(P^g)!\cdot\vol(P^g)\cdot\det(I-\rho(g))_{(M^g)^\perp}}{\mathrm{ind}(P^g)}\]
is a non-negative integer.
\end{conjecture}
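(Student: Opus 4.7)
My plan is to evaluate both sides of Stapledon's formula directly for $P = \Pi_n$ and $g = \sigma \in S_n$ and check that they agree and give a non-negative integer.

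The right-hand side has four factors. By \eqref{eq:volume}, $\dim \Pi_n^\sigma = m-1$ and $\vol(\Pi_n^\sigma) = n^{m-2}\gcd(\ell_1,\ldots,\ell_m)$. The determinant $\det(I-\rho(\sigma))_{(M^\sigma)^\perp}$ equals $\prod_i \prod_{k=1}^{\ell_i-1}(1-\zeta_{\ell_i}^k) = \prod_{i=1}^m \ell_i$, since the nontrivial eigenvalues of $\rho(\sigma)$ on $M\otimes\mathbb{C}$ are exactly the nontrivial $\ell_i$-th roots of unity contributed by each cycle, together with the identity $\prod_{k=1}^{\ell-1}(1-\zeta_\ell^k) = \ell$. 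The remaining factor $\mathrm{ind}(\Pi_n^\sigma)$ requires more care: the affine span of $\Pi_n^\sigma$ consists of $\sum c_j\be_{\sigma_j}$ with $\sum \ell_j c_j = n(n+1)/2$, so $\aff(k\Pi_n^\sigma)\cap\Z^n\neq\emptyset$ iff $\gcd(\ell_j)\mid k\,n(n+1)/2$. A $2$-adic valuation argument essentially identical to case~(ii) of the proof of \Cref{lem:affine_subspace} then shows that this divisibility fails at $k=1$ exactly when $\{[m]\}$ is $\lambda$-incompatible, and that $k=2$ works in that case. Thus $\mathrm{ind}(\Pi_n^\sigma)=2$ when $\{[m]\}$ is $\lambda$-incompatible and $\mathrm{ind}(\Pi_n^\sigma)=1$ otherwise.

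To compute the left-hand side I use $\phi[z](\sigma) = \prod_i(1-z^{\ell_i})\cdot\Ehr_{\Pi_n^\sigma}(z)$ from \Cref{prop:Ehrhart_series}. Evaluating at $z=1$ amounts to multiplying $\prod_i(1-z^{\ell_i})/(1-z)^m\big|_{z=1} = \prod_i\ell_i$ by $(1-z)^m\Ehr_{\Pi_n^\sigma}(z)\big|_{z=1}$. Only the leading term of the quasipolynomial $L_{\Pi_n^\sigma}(t)$ contributes to the latter, and by \Cref{thm:main theorem} this leading coefficient is $v_{[m]} = n^{m-2}\gcd(\ell_j)$ for even $t$, and equals $v_{[m]}$ or $0$ for odd $t$ according to whether $\{[m]\}$ is $\lambda$-compatible. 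A brief expansion of $\sum_{t\text{ even}}t^{m-1}z^t = 2^{m-1}A_{m-1}(z^2)/(1-z^2)^m$ and of $\sum_{t\text{ odd}}t^{m-1}z^t$ near $z=1$ shows that each contributes $(m-1)!/2$, yielding $(1-z)^m\Ehr_{\Pi_n^\sigma}(z)\big|_{z=1} = n^{m-2}\gcd(\ell_j)\,(m-1)!/\mathrm{ind}(\Pi_n^\sigma)$. Multiplying gives
\[
\phi[1](\sigma) \;=\; \frac{(m-1)!\cdot n^{m-2}\gcd(\ell_j)\cdot \prod_{i=1}^m\ell_i}{\mathrm{ind}(\Pi_n^\sigma)},
\]
exactly the right-hand side of \Cref{conj:12.3}. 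Non-negativity is clear; integrality is automatic when $\mathrm{ind}=1$, and when $\mathrm{ind}=2$ it follows because all $\ell_j$ are then even, forcing $\gcd(\ell_j)$ to be even.

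The only subtle step is cleanly matching the $2$-adic condition governing $\mathrm{ind}(\Pi_n^\sigma)$ with the $\lambda$-compatibility condition of \Cref{def:compatibility}. This is the same bookkeeping that already powered \Cref{lem:affine_subspace}, so it reuses work present in the paper and no genuinely new difficulty arises.
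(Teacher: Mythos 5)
Your proposal is correct and takes essentially the same route as the paper: it evaluates $\dim \Pi_n^\sigma$, $\vol \Pi_n^\sigma$, the determinant, and $\mathrm{ind}(\Pi_n^\sigma)$, and then uses the parity of the $\ell_i$ to get integrality when $\mathrm{ind}=2$; your additional verification that $\phi[1](\sigma)$ really equals the stated formula, via the leading term of $\Ehr_{\Pi_n^\sigma}(z)$ at $z=1$, is sound and simply makes explicit an identity the paper takes from Stapledon. Note also that your criterion for $\mathrm{ind}(\Pi_n^\sigma)=2$ (the one-block partition $\{[m]\}$ is $\lambda$-incompatible) agrees with the paper's (all partitions of $[m]$ are $\lambda$-incompatible): if all parts are even and the minimum $2$-valuation occurs an odd number of times overall, then in any set partition some block contains that minimum valuation an odd number of times, so the two conditions coincide.
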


\begin{conjecture}\cite[Conjecture~12.4]{Stapledon}\label{conj:12.4}
If $\phi[z]$ is a polynomial and the $i^{\text{th}}$ coefficient of the $h^*$-polynomial of $P$ is positive, then the trivial representation occurs with non-zero multiplicity in the virtual character $\phi_i$.
\end{conjecture}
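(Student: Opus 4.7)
The plan exploits \Cref{prop:poly}, which established that the equivariant $\phi$-series of $\Pi_n$ is a polynomial if and only if $n \le 3$. Consequently, for $n \ge 4$ the hypothesis of \Cref{conj:12.4} fails outright and the statement holds vacuously. The substantive verification therefore reduces to the three small cases $n \in \{1,2,3\}$.

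For these cases the approach is to read off the conclusion directly from the explicit decompositions of $\phi_{\Pi_n}[z]$ computed in the proof of \Cref{prop:eff}. Since $\phi[z](e) = h^*[z]$, the $i$-th coefficient of the $h^*$-polynomial equals $\dim \phi_i$, so the hypothesis ``the $i$-th coefficient of $h^*[z]$ is positive'' is equivalent to ``$\phi_i$ is a nonzero virtual character.'' For $n = 1, 2$ we have $\phi_{\Pi_n}[z] = \chi_{triv}$, so the only nonzero coefficient is $\phi_0 = \chi_{triv}$, which of course contains the trivial representation. For $n = 3$ the decomposition
\begin{equation*}
\phi_{\Pi_3}[z] = \chi_{triv} + (\chi_{triv} + \chi_{alt} + \chi_{std})\, z + \chi_{triv}\, z^2
\end{equation*}
shows that $\chi_{triv}$ appears with multiplicity $1$ in each of $\phi_0$, $\phi_1$, and $\phi_2$, while the corresponding $h^*$-polynomial $1 + 4z + z^2$ has all three coefficients positive, so the conjecture is satisfied coefficient-by-coefficient.

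The main ``obstacle'' is really cosmetic: essentially all the work has been done upstream, in proving polynomiality (\Cref{prop:poly}) and carrying out the explicit character-table decomposition inside (\Cref{prop:eff}). Once one has those two inputs, \Cref{conj:12.4} for $\Pi_n$ follows by direct inspection. Together with the parallel verifications of \Cref{conj:12.2} and \Cref{conj:12.3}, this would complete the proof of \Cref{prop:conjectures hold}. It is perhaps worth remarking that what makes this conjecture tractable in our setting is precisely the strong dichotomy of \Cref{prop:poly}: the polynomiality hypothesis is so restrictive for $\Pi_n$ that it quarantines the combinatorially interesting cases (where $\phi[z]$ is merely a rational function with a genuine pole at $z = -1$) away from the reach of the conjecture.
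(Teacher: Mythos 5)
Your proposal is correct and follows essentially the same route as the paper: use \Cref{prop:poly} to reduce to $n\le 3$ and then verify the conclusion by inspecting the explicit decompositions of $\phi_{\Pi_n}[z]$ from the proof of \Cref{prop:eff}. (One small aside: positivity of the $i$-th coefficient of $h^*$ implies $\phi_i\neq 0$ since that coefficient is $\phi_i(e)$, but the converse need not hold for virtual characters; this does not affect your argument, which checks the cases directly.)
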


\begin{proposition}\label{prop:conjectures hold}
Conjectures~\ref{conj:12.2}, \ref{conj:12.3}, and \ref{conj:12.4} hold for permutahedra under the action of the symmetric group.
\end{proposition}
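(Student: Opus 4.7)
The plan is to dispatch the three conjectures in order, exploiting the classification we already have. Conjectures~\ref{conj:12.2} and~\ref{conj:12.4} have hypotheses that by Propositions~\ref{prop:eff} and~\ref{prop:poly} hold only when $n\leq 3$; in those cases everything reduces to a finite check on the data of Table~\ref{tab:n=3}. Conjecture~\ref{conj:12.3}, by contrast, concerns every $\sigma \in S_n$, so it constitutes the bulk of the work.

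For Conjecture~\ref{conj:12.2}, the cases $n=1,2$ give $\phi[1] = \chi_{\mathrm{triv}}$, a permutation character. For $n=3$, summing the characters computed in the proof of Proposition~\ref{prop:eff} yields $\phi_{\Pi_3}[1] = 3\chi_{\mathrm{triv}} + \chi_{\mathrm{alt}} + \chi_{\mathrm{std}}$; I will exhibit this as $\mathrm{Ind}_{S_3}^{S_3}\mathbf{1} + \mathrm{Ind}_{A_3}^{S_3}\mathbf{1} + \mathrm{Ind}_{\langle(12)\rangle}^{S_3}\mathbf{1}$, i.e., the permutation character of the $S_3$-action on the $6$-element set $S_3/S_3 \sqcup S_3/A_3 \sqcup S_3/\langle(12)\rangle$. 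For Conjecture~\ref{conj:12.4}, the only nontrivial case is $n=3$, where every coefficient of $h^*_{\Pi_3}(z) = 1 + 4z + z^2$ is positive; I will compute $\langle \phi_i, \chi_{\mathrm{triv}}\rangle = \tfrac{1}{|S_3|}\sum_{\sigma\in S_3}\phi_i(\sigma)$ for $i=0,1,2$ by averaging the columns of Table~\ref{tab:n=3} and verify that each equals $1$.

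The core task is Conjecture~\ref{conj:12.3}. I will evaluate both sides of the claimed identity in terms of the cycle type $\lambda=(\ell_1,\ldots,\ell_m)$ of $\sigma$. For the left side, I take the $z\to 1$ limit of $\phi[z](\sigma) = \prod_{i=1}^m(1-z^{\ell_i})\cdot\Ehr_{\Pi_n^\sigma}(z)$ from Proposition~\ref{prop:Ehrhart_series}. Since $\prod_i(1-z^{\ell_i})\sim \ell_1\cdots\ell_m\,(1-z)^m$ near $z=1$ and $\Ehr_{\Pi_n^\sigma}(z)$ has a pole of order exactly $m$ at $z=1$ coming solely from the $\pi=\{[m]\}$ summand (with $v_{\{[m]\}} = n^{m-2}\gcd(\ell_1,\ldots,\ell_m)$ and $A_{m-1}(1) = (m-1)!$), all other summands vanish in the limit. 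The result is
\[
\phi[1](\sigma) = \frac{(m-1)!\cdot n^{m-2}\gcd(\ell_1,\ldots,\ell_m)\cdot \ell_1\cdots\ell_m}{\mathrm{ind}(\Pi_n^\sigma)},
\]
where the index equals $1$ or $2$ according to whether $\{[m]\}$ is $\lambda$-compatible, by combining Lemma~\ref{lem:affine_subspace} (which characterizes when $\mathrm{aff}(\Pi_n^\sigma)$ meets $\Z^n$) with Corollary~\ref{cor:oddparts} (which ensures the polytope is at worst half-integral, so the index is always $1$ or $2$). For the right side, I use $\dim(\Pi_n^\sigma) = m-1$, $\vol(\Pi_n^\sigma) = n^{m-2}\gcd$ from Theorem~\ref{thm:vol}, and the computation $\det(I-\rho(\sigma))_{(M^\sigma)^\perp} = \ell_1\cdots\ell_m$ obtained from the classical identity $\prod_{k=1}^{\ell-1}(1-e^{2\pi i k/\ell}) = \ell$ applied to the nontrivial eigenvalues coming from each cycle. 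The two sides then visibly coincide. Non-negativity is immediate; integrality in the index-$2$ case follows because $\lambda$-incompatibility of $\{[m]\}$ forces every $\ell_i$ to be even, so $\ell_1\cdots\ell_m$ absorbs the factor of $2$.

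The main technical hurdle is the factor-of-$2$ bookkeeping in Conjecture~\ref{conj:12.3}: I must carefully track how the $\lambda$-incompatible leading summand in Proposition~\ref{prop:Ehrhart_series} contributes its principal part as $v_{\{[m]\}}\cdot 2^{m-1}(m-1)!/(1-z^2)^m \sim v_{\{[m]\}}(m-1)!/(2(1-z)^m)$, producing the extra $\tfrac{1}{2}$ in the limit, and verify that this precisely matches $1/\mathrm{ind}(\Pi_n^\sigma) = 1/2$ in exactly those cases flagged by Lemma~\ref{lem:affine_subspace}. Once this match is confirmed, the identification with Stapledon's formula is mechanical and non-negative integrality falls out.
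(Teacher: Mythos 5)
Your proposal is correct and takes essentially the same route as the paper's proof: the same finite character checks for Conjectures~\ref{conj:12.2} and~\ref{conj:12.4} (your induced-character decomposition of $3\chi_{triv}+\chi_{alt}+\chi_{std}$ matches the paper's $\chi_{triv}+(\chi_{triv}+\chi_{alt})+(\chi_{triv}+\chi_{std})$), and for Conjecture~\ref{conj:12.3} the same ingredients $\dim \Pi_n^\sigma = m-1$, $\vol \Pi_n^\sigma = n^{m-2}\gcd(\ell_1,\dots,\ell_m)$, $\det(I-\rho(\sigma))_{(M^\sigma)^\perp}=\ell_1\cdots\ell_m$, $\mathrm{ind}(\Pi_n^\sigma)\in\{1,2\}$, and the parity argument when the index is $2$. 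Your additional $z\to 1$ limit computation confirming that $\phi[1](\sigma)$ equals Stapledon's formula, and your index criterion via $\lambda$-compatibility of the one-block partition $\{[m]\}$ (equivalent to the paper's ``all partitions incompatible'' condition), are correct refinements of what the paper states more briefly.
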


\begin{proof}
\ref{conj:12.2}:
This statement only applies to $\Pi_1$, $\Pi_2$, and $\Pi_3$. From the proof of \Cref{prop:eff} we obtain that $\phi[1]$ is the trivial character for $\Pi_1$ and $\Pi_2$ and the statement holds.
For $\Pi_3$ we have 
	\begin{equation}\label{eq:permchar}
	\phi[1]=3\chi_{triv} + \chi_{alt} + \chi_{std} = \chi_{triv} + (\chi_{triv}+\chi_{alt}) + (\chi_{triv}+\chi_{std}).
	\end{equation}
Now $\chi_{triv}+\chi_{alt}$ is the permutation character of the sign action of $S_3$ on the set $[2]$, and $\chi_{triv}+\chi_{std}$ is the character of the permutation representation of $S_3$.
Hence all summands on the right side of \eqref{eq:permchar} are permutation characters, so their sum is as well.

\ref{conj:12.3}:
For $\sigma\in S_n$ of cycle type $\lambda=(\ell_1,\dots,\ell_m)$, the dimension of $\Pi_n^\sigma$ is $m-1$ and the volume is $n^{m-2}\gcd(\ell_1,\dots,\ell_m)$.
Now, the fixed lattice $M^g= \Z\{\be_{\sigma_1}, \cdots, \be_{\sigma_m}\}$ has rank $m$, so
	\[ 
	\det(I-\rho(\sigma)\cdot z)_{(M^\sigma)^\perp} =
	\frac{(1-z)\det(I-\rho(\sigma)\cdot z)}{(1-z)^m} =
	 \prod_{i=1}^m (1+z+\dots+z^{\ell_i-1}).
	 \]
	 Therefore the numerator is $(m-1)!\cdot n^{m-2} \cdot \gcd(\ell_1,\dots,\ell_m) \cdot \ell_1\cdots\ell_m$. The denominator is
	\[\mathrm{ind}(\Pi_n^\sigma) =
	\begin{cases} 
	2 &\text{if all } \pi\vDash[m] \text{ are }\lambda\text{-incompatible,}\\ 
	1 &\text{otherwise}.
	 \end{cases}
	  \]
When the denominator is $2$, all the $\ell_i$ must be even, so the numerator is even. The desired result follows.

\ref{conj:12.4}:
We need to check this for $\Pi_1$, $\Pi_2$, and $\Pi_3$.
For $\Pi_1$ and $\Pi_2$ the $h^*$-polynomial is $1$ and $\phi_0 =\chi_{triv}$.
For $\Pi_3$, the $h^*$-polynomial is $1+4z+z^2$, and $\phi_0  = \chi_{triv} $, $\phi_1 = \chi_{triv} + \chi_{alt} + \chi_{std}$, and $\phi_2 = \chi_{triv} $ all contain a copy of the trivial character.
\end{proof}

\section{Acknowledgments}
We would like to thank Sophia Elia for developing a useful Sage package for equivariant Ehrhart theory, and Matthias Beck, Benjamin Braun, Christopher Borger, Ana Botero, Sophia Elia, Donghyun Kim, Jodi McWhirter, Dusty Ross, Kristin Shaw, and Anna Schindler for fruitful conversations.
This work was completed while FA was a Spring 2019 Visiting Professor at the Simons Institute for Theoretical Computer Science in Berkeley, and a 2019-2020 Simons Fellow while on sabbatical in Bogot\'a; he is very grateful to the Simons Foundation, San Francisco State University, and the Universidad de Los Andes  for their support.

\small
\bibliographystyle{amsplain}
\bibliography{references}

\end{document}